\newtheorem{theorem}{Theorem}[section]
\newtheorem{corollary}[theorem]{Corollary}
\newtheorem{lemma}[theorem]{Lemma}
\theoremstyle{definition}
\newtheorem{remark}[theorem]{Remark}
\def\dist{\mathop{\text{\normalfont dist}}}
\title{Hopf's lemmas and boundary point results  for the fractional $p$-Laplacian}
\author[Pablo Ochoa and Ariel Salort]{}
\subjclass{35R11, 35J62, 46E30, 35J60}
\keywords{fractional partial differential equations; Hopf's lemma; $p$-Laplacian, Strong minimum principle;
Boundary point lemma}
\thanks{The first author was partially supported by SIIP-UNCUYO  B017-TI. The second author was partially supported by ANPCyT under grant PICT 2019-3530. Both authors are members of CONICET.}
\thanks{$^*$Corresponding author: Ariel Salort}
\begin{document}
\maketitle

% Enter the first author's name and email address; email addresses are required for each author.
% Use footnote notations to indicate address and affiliations with commas between numbers if more than one address applies; see below for examples.
\centerline{\scshape
Pablo Ochoa $^{{\href{mailto:pablo.ochoa@ingenieria.uncuyo.edu.ar}{\textrm{\Letter}}}1}$
and Ariel Salort $^{{\href{mailto:asalort@dm.uba.ar}{\textrm{\Letter}}}*2}$}

\medskip

{\footnotesize
% Enter the full affiliation and country name:
% Do not insert commas or periods at the end of lines.
 \centerline{$^1$Universidad Nacional de Cuyo, Fac. de Ingenier\'ia. CONICET. Universidad J. A. Maza, Parque Gral. San Mart\'in 5500\\
 %Mendoza, Argentina.}
} % Do not forget to end {\footnotesize with the sign }

\medskip

{\footnotesize
 % Enter the full affiliation and country name:
 \centerline{$^2$Instituto de C\'alculo, CONICET, \\
 Departamento de Matem\'atica, FCEN - Universidad de Buenos Aires\\ 
 %Ciudad Universitaria, 0+$\infty$ building, C1428EGA, Av. Cantilo s/n\\
 %Buenos Aires, 
 , Argentina}
}

\bigskip

% The name of the handling editor will be entered by AIMS production staff.
% "Communicated by Handling Editor" is not needed for special issue.
 \centerline{(Communicated by Handling Editor)}

%%%%%%%%%%%%%%%%%%%%%%%%%%%%%%%%%%%%%%%%%%%%%%%%%%%%%%%
%             5. ABSTRACT
%%%%%%%%%%%%%%%%%%%%%%%%%%%%%%%%%%%%%%%%%%%%%%%%%%%%%%%

\begin{abstract}
In this paper, we consider different versions of the classical Hopf's boundary lemma in the setting of the fractional $p-$Laplacian for $p \geq 2$. We start by providing for a new proof  to a  Hopf's lemma   based on comparison principles. Afterwards, we give a Hopf's result for sign-changing potential describing the behavior of the fractional normal derivative of solutions around boundary points. The main contribution here is that we do not need to impose a global condition on the sign of the solution. Applications of the main results to boundary point lemmas and a discussion of  non-local non-linear overdetermined problems are also provided.
\end{abstract}

\section{Introduction}
Hopf's classical boundary lemma offers a refined analysis of the outer normal derivative of superharmonic functions at a minimum point on the boundary of a domain that satisfies the interior ball condition, which is useful for proving a strong minimum principle for second order uniformly elliptic operators. More precisely,  if $u\in C^2(\overline\Omega)$, being $\Omega\subset \mathbb R^n$ open and bounded with the interior ball condition, and $x_0\in \partial\Omega$ is such that $u(x_0)<u(x)$ for all $x\in\Omega$, then
$$
-\Delta u\geq c(x) u \text{ in } \Omega \implies \frac{\partial u}{\partial \nu}(x_0)<0,
$$
where $c\in L^\infty(\Omega)$ is such that $c(x)\leq 0$, and $\partial u / \partial\nu$ is the outer normal derivative of $u$ at $x_0$. More generally, whether or not the normal derivative exists, it holds that
\begin{equation} \label{eq.intro}
\liminf_{\Omega \ni x\to x_0} \frac{u(x)-u(x_0)}{|x-y_0|}>0,
\end{equation}
where the angle between $x_0-x$ and the normal at $x_0$ is less than $\frac{\pi}{2}-\beta$ for some $\beta>0$. 
%See for instance Lemma 3.4 in \cite{GT}.

A nonlocal (and possibly) nonlinear generalization of this result was introduced in \cite{FJ} and \cite{GS} for the well-known fractional Laplacian operator $(-\Delta)^s$, $s\in (0,1)$, which, up to a normalization constant, is defined as
$$
(-\Delta)^s u(x):=2  \text{p.v.} \int_{\mathbb R^n} \frac{u(x)-u(y)}{|x-y|^{n+2s}} \,dy
$$ 
 The authors in \cite{FJ} proved a Hopf's lemma for the entire antisymmetric weak solution to the problem
\begin{equation} \label{eq.lap}
(-\Delta)^s u\geq c(x)u \text{ in } \Omega
\end{equation}
with  $u\geq 0$ in $\mathbb R^n\setminus \Omega$, where $c$ is a $L^\infty(\Omega)$ function. In \cite{GS} it was studied a Hopf's Lemma  for  continuous solutions to \eqref{eq.lap}  under the assumption that $c \in L^\infty(\Omega)$, being $\Omega\subset \mathbb R^n$ an open set satisfying the interior ball condition at $x_0\in\partial\Omega$. Here, the main difference with the local case lies in the fact that the normal derivative of $u$ at a point $x_0\in\partial\Omega$ is replaced with the limit of the ratio $u(x)/\delta_R(x)^s$, where $\delta_R$ is the distance from $x$ to $\partial B_R$, being $B_R$ an interior ball at $x_0$. More precisely, in \cite{GS} it is proved that  under the mentioned assumptions of $\Omega$, $u$ and $c$: 
\begin{itemize}
\item[(i)] if $\Omega$ is bounded, $c \leq 0$ in $\Omega$  and $u\geq 0$ in $\mathbb R^n\setminus \Omega$, then either $u$ vanishes identically in $\Omega$ or
$$
\liminf_{B_R\ni x\to x_0}\frac{u(x)}{\delta_R^s(x)}>0;
$$
\item[(ii)] if $u\geq 0$ in $\mathbb R^n$, then either $u$ vanishes identically in $\Omega$, or the expression above holds true.
\end{itemize}

Later, these results were generalized for the nonlinear counterpart of \eqref{eq.lap} given in terms of the fractional $p-$Laplacian operator, which for $p\in (1,\infty)$ and $s\in (0,1)$ is  defined, up to a normalization constant,  as
$$
(-\Delta_p)^s u(x):=2  \text{p.v.} \int_{\mathbb R^n} \frac{g_p(u(x)-u(y))}{|x-y|^{n+sp}} \,dy
$$ 
being $g_p(t)=|t|^{p-2}t$, $t\in \mathbb R$ . More precisely, in \cite{DPQ} for any $p\in (1,\infty)$,  assuming that  $\Omega$ fulfills the interior ball condition,  for any  continuous weak solution $u$ to 
\begin{equation} \label{eq.plap}
(-\Delta_p)^s u\geq c(x)g_p(u) \text{ in } \Omega, 
\end{equation}
such that $u\geq 0$ in $\mathbb R^n\setminus \Omega$, being $c$ a continuous functions in $\overline \Omega$, then conclusions $(i)$ and $(ii)$ still true.

Similar results for the so-called regional fractional Laplacian were recently proved in \cite{AFT}. Moreover, different versions of the Hopf's Lemma for anti-symmetric functions on a half space were established in \cite{JL}   and \cite{LC}. The case when the right hand side in \eqref{eq.plap} is 0 was treated in \cite{CLQ}.

\medskip

Suppose now that $u$ is a sign-changing solution of a local elliptic problem in a domain, and assume that $u$ does not change sign in a neighborhood of a boundary point $x_0$ with $u(x_0)=0$. Since the Hopf's lemma works under local assumptions, it can be claimed that  $\frac{\partial u}{\partial \nu}(x_0) \neq 0$ unless $u\equiv 0$ 
%(see Lemma 3.4 in \cite{GT}). 
In the recent paper \cite{DSV}, the authors prove the nonlocal version of this assertion for continuous weak solutions to \eqref{eq.lap} under suitable assumptions on $\Omega$, $u$ and $c$ (see Theorem 1.2 in \cite{DSV}).  The analysis is more subtle than in the linear case, but under additional second order fractional growth assumptions, a similar result for the fractional normal derivative can be obtained.

\medskip

We now discuss the contributions of our paper.  We first give a new proof of the Hopf's lemma for \eqref{eq.plap}.  In our arguments, in contrast with \cite{DPQ}, logarithmic estimates of the solution are not needed. We provide for a self-contained proof which  only uses the fact that the fractional $p-$Laplacian of the distance function  is bounded near the boundary, together with the construction of an appropriate subsolution suggested in \cite{CLQ}.

%After a thorough examination of the proof of the Hopf's lemma presented in \cite{S}, it appears that the subsolution considered may not be effective, raising concerns about the validity of the conclusion of the main result.
%We will subsequently present an alternative proof for the same result to address this issue and ensure the accuracy of the findings. However,  we need to assume a further condition on the $L^\infty$ norm of $c$.

More precisely, in Theorem \ref{main1} we prove that when $c\in C(\overline \Omega)$ is  such that $c(x)\leq 0$ and
$u$ is any weak solution to
\begin{align*} 
\begin{cases}
(-\Delta_p)^s u\geq c(x)g_p(u) &\text{ in }\Omega\\ u > 0 & \text{ in }\Omega\\
u=0 &\text{ in } \mathbb R^n\setminus \Omega,
\end{cases}
\end{align*}
then it holds that
$$
\liminf_{B_R\ni x\to x_0}\frac{u(x)}{\delta_R^s(x)}>0
$$
where $x_0\in\partial\Omega$, being  $\Omega\subset \mathbb R^n$ a bounded  set satisfying the interior ball condition at $x_0$,

\normalcolor

 On the other hand, inspired in \cite{DSV}, we study the behavior of a sign-changing solution of the following nonlocal problem
$$
(-\Delta_p)^s u \geq c(x) g_p(u) \text{ in }\Omega,
$$
being $\Omega\subset \mathbb R^n$ an open subset   satisfying the interior ball condition at $x_0\in\partial\Omega$, and $c\in L^1_{loc}(\Omega)$ is such that $c^- \in L^\infty(\Omega)$   (the function $c$ could change its sign).   If there exists $R>0$ such that $u\geq 0$ in $B_R(x_0)$, $u>0$ in $B_R(x_0)\cap \Omega$, then our main result stated in Theorem \ref{Hopf lemma normal derivative} establishes  that for every $\beta\in (0,\frac{\pi}{2})$ it holds that
\begin{equation*}
\liminf_{\Omega\ni x\to x_0}\dfrac{u(x)-u(x_0)}{|x-x_0|^{s}} > 0,
\end{equation*}
whenever the angle between $x-x_0$ and the vector joining $x_0$ and the center of the interior ball is smaller than $\pi/2-\beta$.  Moreover, as applications of Theorem \ref{Hopf lemma normal derivative} we also provide two versions of the classical boundary point lemma (see for instance Section 2.7 in \cite{PS}) in the setting of the fractional $p-$Laplacian. See Theorem \ref{bpl1} and Theorem \ref{bpl2}.

 \medskip

We also aboard a problem where some redundant condition is imposed on the free boundary, which is known as an \emph{overdetermined problem}. In the classical case, if $\Omega\subset \mathbb R^n$ is a bounded domain whose boundary is a priori unknown, %in \cite{S71,W71} 
Serrin and Weinberger proved that if $u$ is the solution of the torsion problem
\begin{align*}
\begin{cases}
-\Delta u=1 &\text{ in } \Omega\\
u=0 &\text{ in }\partial\Omega,
\end{cases}
\end{align*}
with the additional condition  $-\frac{\partial u}{\partial \nu}=\kappa$  along $\partial\Omega$ (there $\kappa$ is a constant and $\nu$ is the outer normal to $\partial\Omega$), then $\Omega$ must be a ball. A related result for the fractional Laplacian can be found in \cite{GS}. However, this situation for the fractional $p-$Laplacian is more subtle. In Theorem \ref{over} we prove that if $u$ is a weak solution to
\begin{align*}
\begin{cases}
(-\Delta_p)^s u=1 & \text{ in } \Omega\\
u=0 &\text{ in } \mathbb R^n\setminus \Omega\\
\displaystyle\lim_{\Omega\ni x\to x_0} \frac{u(x)}{\delta_\Omega^s (x)}=q(|z|) &\text{ for every }x_0\in \partial\Omega,
\end{cases}
\end{align*}
where $\Omega\subset \mathbb R^n$ is open and bounded and $q(r)$ is a non-negative function for $r>0$, and $q$ satisfies a suitable growth behavior, then $\Omega$ must be a ball centered at the origin.

\medskip

 The paper is organized as follows. In Section \ref{prelim}, we give a basic introduction to $p-$fractional Sobolev spaces. We also introduce the notation and preliminary results that will be used throughout the paper. In Section \ref{Hopf principle}, we state and give an alternative proof of the Hopf's principle for the fractional $p-$Laplacian. Next, in Section \ref{hopf lemma sign changing}, we provide the Hopf's lemma for sign-changing potentials. Some consequences of the result are also given. Finally, in Sections \ref{bp theorem} and \ref{overdetermined}, we give applications of the main results to boundary point lemmas and we discuss overdetermined problems for the fractional $p-$Laplacian.

\section{Preliminaries}\label{prelim}
\subsection{Fractional Sobolev spaces}
Let $1<p<\infty$. We define the monotone function $g_p\colon \mathbb R\to \mathbb R$ by
$$
g_p(t)=|t|^{p-2}t.
$$
For  $s\in (0,1)$, $p\in (1,\infty)$ and $\Omega\subseteq\mathbb R^n$, the fractional Sobolev spaces are  defined as 
$$W^{s,p}(\Omega):=\left\{u\colon \mathbb R^n\to \mathbb R \text{ measurable s.t. }  [u]_{W^{s,p}(\Omega)}+ \|u\|_{L^p(\Omega)} <\infty\right\}
$$ 
endowed with the norm
$$
\|u\|_{W^{s,p}(\Omega)}=\|u\|_{L^p(\Omega)}+[u]_{W^{s,p}(\Omega)},
$$
being 
$$
[u]_{W^{s,p}(\Omega)}=\left(\iint_{\Omega\times \Omega} \frac{ |u(x)-u(y)|^p }{|x-y|^{n+sp}}\,dxdy\right)^\frac1p, \qquad \|u\|_{L^p(\Omega)}= \left( \int_\Omega |u|^p\,dx \right)^\frac1p.
$$
Then $W^{s,p}(\Omega)$ with the norm $\|\cdot\|_{W^{s,p}(\Omega)}$ is a reflexive Banach space. In order to consider boundary conditions we also define the space
$$
W^{s,p}_0(\Omega)=\overline{C_c^\infty(\Omega})\subset W^{s,p}(\mathbb R^n),
$$ 
where the closure is taken with respect to the norm $\|\cdot\|_{W^{s,p}(\Omega)}$. When the set $\Omega$ has Lipschitz boundary, the space $W^{s,p}_0(\Omega)$ can be characterized as
$$
W^{s,p}_0(\Omega):=\{u\in W^{s,p}(\mathbb R^n)\colon u=0\text{ a.e. in } \mathbb R^n\setminus \Omega\}.
$$
When $\Omega\subset \mathbb R^n$ is bounded, we also consider the space
\begin{align*}
\widetilde{W}^{s,p}(\Omega)&=\{ u\in L_{sp}(\mathbb R^n)\colon \exists U\colon \Omega\subset\subset U \text{ and } \|u\|_{W^{s,p}(U)}<\infty   \},
\end{align*}
where  the tail $L_{sp}(\mathbb R^n)$ space is defined as
$$
L_{sp}(\mathbb R^n)=\left\lbrace u\in L^1_{loc}(\mathbb R^n)\colon \int_{\mathbb R^n} \frac{ |u(x)|^{p-1} }{(1+|x|)^{n+sp}} \,dx<\infty \right\rbrace.
$$

\medskip

The fractional $p-$Laplacian is defined for any sufficiently smooth function $u:\mathbb{R}^{n}\to \mathbb{R}$ as
$$
(-\Delta_p)^su(x)=2\text{p.v.}\int_{\mathbb{R}^{n}} \frac{|u(x)-u(y)|^p}{|x-y|^{n+sp}}\,dxdy, \quad x \in \mathbb{R}^{n}.
$$
Moreover,  the following representation formula holds
$$
\langle (-\Delta_p)^s u,\varphi \rangle :=  \iint_{\mathbb R^n\times\mathbb R^n} \frac{g_p(u(x)-u(y)) (\varphi(x)-\varphi(y))}{|x-y|^{n+sp}}\,dxdy
$$
for any $\varphi\in W^{s,p}(\mathbb R^n)$.

Finally, we introduce the notion of weak solutions. We say that $u\in \widetilde{W}^{s,p}(\Omega)$ is a \emph{weak super-solution} to $(-\Delta_p)^s u=f$ in $\Omega$ if
$$
\langle (-\Delta_p)^s u,\varphi \rangle \geq \int_\Omega f\varphi\,dx
$$
for any $\varphi\in W^{s,p}_0(\Omega)$ satisfying $\varphi\geq 0$ a.e. in $\Omega$.

Similarly, reversing the inequalities we can define \emph{weak sub-solutions} to  $(-\Delta_p)^s u=f$.

\subsection{The interior ball condition}

Given an open set $\Omega$ of $\mathbb{R}^{n}$ and $x_0 \in \partial \Omega$, we say that $\Omega$ satisfies the  \emph{interior ball condition} at $x_0$ if there is $r_0 >0$ such that, for every $r \in (0, r_0]$, there exists a ball $B_r(x_r) \subset \Omega$ with $x_0 \in \partial \Omega \cap \partial B_r(x_r)$.

\normalcolor

It is well-known that $\Omega$ satisfies the interior ball condition if and only if $\partial\Omega\in C^{1,1}$. 
%{See \cite{LY} for details.

\section{Hopf's principle}\label{Hopf principle}
% Pedimos
% $p^->\frac{1}{1-s}$.
 In this section we provide for  a proof of the Hopf's lemma for supersolutions to $(-\Delta_p)^s u= c(x)g_p(u)$ in $\Omega$, being $c$ a nonpositive function in a bounded set $\Omega$. We also recall that
 $$g_p(t):=|t|^{p-2}t, \quad p \geq 2.$$
 
 The next results generalizes \cite[Theorem 1.3]{CLQ}. 
 
\begin{theorem} \label{main1}
Let $\Omega\subset \mathbb R^n$ be a bounded  set satisfying the interior ball condition at $x_0\in\partial\Omega$, let $c\in C(\overline \Omega)$ be such that $c(x)\leq 0$ in $\Omega$ and let $u\in \widetilde W^{s,p}(\Omega)\cap C(\overline{\Omega})$ be a  weak solution to
\begin{align} \label{eq11}
\begin{cases}
(-\Delta_p) u\geq c(x)g_p(u) &\text{ in }\Omega\\
u>0 &\text{ in } \Omega\\
u\geq 0 &\text{ in } \mathbb R^n\setminus \Omega.
\end{cases}
\end{align}Then
$$
\liminf_{B_R\ni x\to x_0}\frac{u(x)}{\delta_R^s(x)}>0
$$
where $B_R\subseteq\Omega$ and $x_0\in\partial B_R$ and $\delta_R(x)$ is the distance from $x$ to $B_R^c$.
\end{theorem}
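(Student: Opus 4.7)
The plan is to combine an interior-ball argument with an explicit subsolution modeled on the $s$-th power of the distance function. Fix the interior ball $B_R=B_R(x_R)$ tangent to $\partial\Omega$ at $x_0$, and use the continuity and strict positivity of $u$ on $\Omega$ to obtain $u\ge m>0$ on the closed half-radius ball $\overline{B_{R/2}(x_R)}$. With $A:=B_R\setminus\overline{B_{R/2}(x_R)}$, the goal is to construct a weak subsolution $v$ of $(-\Delta_p)^s w = c(x)g_p(w)$ on $A$ such that $v\le u$ on $\mathbb{R}^n\setminus A$ and $v(x)\ge \gamma\,\delta_R^s(x)$ near $x_0$ for some $\gamma>0$, and then to conclude by the weak comparison principle.

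Following \cite{CLQ}, I would take $v=\varepsilon\Psi$, where $\Psi\ge 0$ is a barrier built from $\delta_R^s$ and extended outside $B_R$ in such a way that $(-\Delta_p)^s\Psi$ admits an upper bound on $A$ compatible with the nonpositive right-hand side. The central analytic input is the one announced in the introduction, namely the boundedness of the fractional $p$-Laplacian of the distance power near $\partial B_R$. Combined with a positive lower bound $\Psi\ge \kappa$ on $A$, the continuity of $c$, and the scaling identity $(-\Delta_p)^s(\varepsilon\Psi)=\varepsilon^{p-1}(-\Delta_p)^s\Psi$, this allows $\varepsilon$ to be chosen small enough that $(-\Delta_p)^s v\le c(x)g_p(v)$ holds on $A$, while simultaneously $v\le m$ on $\overline{B_{R/2}(x_R)}$ and $v\le u$ on $\mathbb{R}^n\setminus B_R$. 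Since $t\mapsto c(x)g_p(t)$ is non-increasing for $c\le 0$, the weak comparison principle then gives $v\le u$ on $A$, and the elementary estimate $\Psi(x)\ge c_0\,\delta_R^s(x)$ near $\partial B_R$ (coming for instance from $R^2-|x-x_R|^2\ge R\,\delta_R(x)$ near the boundary) produces
\begin{equation*}
\liminf_{B_R\ni x\to x_0}\frac{u(x)}{\delta_R^s(x)}\ge \varepsilon\gamma>0.
\end{equation*}

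The principal obstacle is the sign-aware construction of $\Psi$. Because $c\le 0$ and $v>0$ on $A$, the right-hand side $c(x)g_p(v)$ is nonpositive, so $(-\Delta_p)^s v$ must itself be controlled from above by a nonpositive quantity on $A$. This rules out the naive torsion-type choice $\Psi(x)=(R^2-|x-x_R|^2)_+^s$, whose fractional $p$-Laplacian has the wrong (positive) sign in $B_R$; the idea of \cite{CLQ} is to adjust $\Psi$ (through a suitable extension outside $B_R$) so that the nonlocal tail contribution at points of $A$ is dominated by the interior contribution. Once this sign balance is secured, the smallness of $\varepsilon$ handles the quantitative matching of constants, and the remaining pieces — the weak comparison principle with a monotone zero-order term and the distance asymptotics — are standard in the theory of the fractional $p$-Laplacian.
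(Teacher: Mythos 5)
Your skeleton — interior ball, barrier built from $\delta_R^{s}$, positive lower bound for $u$ on an interior neighborhood, and weak comparison — is the same as the paper's, but there is a genuine confusion in the way you propose to tune constants, and the crucial construction is left as a black box.

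You write $v=\varepsilon\Psi$ and claim that the scaling identity $(-\Delta_p)^s(\varepsilon\Psi)=\varepsilon^{p-1}(-\Delta_p)^s\Psi$, together with $\Psi\ge\kappa$ on $A$ and the continuity of $c$, ``allows $\varepsilon$ to be chosen small enough that $(-\Delta_p)^s v\le c(x)g_p(v)$ holds on $A$.'' This is not so: since $c(x)g_p(\varepsilon\Psi)=\varepsilon^{p-1}c(x)g_p(\Psi)$, the claimed inequality is $\varepsilon^{p-1}(-\Delta_p)^s\Psi\le\varepsilon^{p-1}c(x)g_p(\Psi)$, which holds for one $\varepsilon>0$ if and only if it holds for all of them; the $(p-1)$-homogeneity makes the differential inequality on $A$ completely invariant under your scaling. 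So all the work must already be hidden in the construction of a \emph{fixed} barrier $\Psi$ satisfying $(-\Delta_p)^s\Psi\le c(x)g_p(\Psi)$ on $A$, after which $\varepsilon$ only serves to push $v$ below $u$ on $\overline{B_{R/2}(x_R)}$ and on the support of the extension. Your write-up identifies this construction as ``the central analytic input'' and gestures at the CLQ-style extension, but does not carry it out — yet this is precisely where the estimate lives.

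For comparison, the paper's subsolution is \emph{not} a single scaled barrier. It is $\underline u(x)=\beta d^{s}(x)+\chi_{D}(x)u(x)$, with $d(x)=\dist(x,B_R^{c})$ and $D\subset\subset\Omega\setminus B_R$ a fixed patch, so that only the distance part is scaled by $\beta$ while the bump $\chi_D u$ stays fixed. This asymmetry is essential because the paper compares $(-\Delta_p)^s\underline u\le c\,g_p(u)\le(-\Delta_p)^s u$ — note $g_p(u)$, not $g_p(\underline u)$ — and then uses the zeroth-order-free comparison principle of \cite[Proposition 2.5]{DPQ}: here the right-hand side $c\,g_p(u)$ is a fixed nonpositive quantity, so $(-\Delta_p)^s\underline u$ must be made genuinely negative, which is exactly what the non-scaled bump supplies via the fixed correction $-\widetilde M_0$ produced by \cite[Lemma 2.8]{IMS}. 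You can salvage your proposal either by writing $v=\varepsilon\Psi_1+\Psi_2$ with $\Psi_2$ a fixed bump and only $\Psi_1$ scaled, or by keeping $v=\varepsilon\Psi$ but honestly constructing $\Psi$ so that the subsolution inequality with right-hand side $c\,g_p(\Psi)$ already holds, and then invoking a comparison principle for the operator with the monotone zeroth-order term $-c\,g_p(\cdot)$, $c\le 0$. In either case the sentence attributing the differential inequality on $A$ to ``$\varepsilon$ small'' must be removed, and the barrier construction must be made explicit.
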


\begin{proof}
For a given $x_0\in \partial \Omega$, by the regularity of $\Omega$, %(see for instance \cite{AKSZ}) 
there exists $x_1\in \Omega$ on the normal line to $\partial \Omega$ at $x_0$ and $r_0> 0$ such that
$$
B_{r_0}(x_1)\subset \Omega,\quad  \overline{B}_{r_0}(x_1)\cap \partial \Omega= \{x_0\} \quad \text{and }\quad \text{dist} (x_1,\Omega^{c})=|x_1-x_0|.
$$
We will assume without loss of generality that $x_0= 0$, $r_0=1$ and $x_1= e_n$, with $e_n=(0, \ldots, 0, 1)\in \mathbb{R}^{n}$ and consider a nontrivial weak supersolution $u\in \widetilde{W}^{s,p}(\Omega)\cap C(\overline \Omega)$  to \eqref{eq11}.  

\medskip

Under these assumptions, in  \cite[Theorem 4]{CLQ} it is proved that there exist $r\in (0,1/3\sqrt{5})$ (to be fixed later)  and $C_1>0$ such that 
$$
(-\Delta_p)^s d^s(x)\leq C_1 \quad \text{weakly in } B_1(e_n)\cap B_r(0),
$$
where we have defined the distance function
 $d\colon \mathbb R^n\to \mathbb R$ as $d(x)=\dist (x,B_1^c(e_n))$.

\medskip

We build now a suitable supersolution. Let $D\subset\subset B_1^c(e_n)\cap \Omega$ be a smooth domain and let $\beta>0$ to be determinate. Define 
$$
\underline u(x)= \beta d^s(x)+ \chi_D(x)  u(x).
$$

By \cite[Lemma 2.8]{IMS}, we get

$$(-\Delta_p)^s \underline u \leq \beta^{p-1}C_1 + h \quad \text{weakly in }B_1(e_n)\cap B_r(0),$$where the function $h$ is given by
\begin{equation}
h(x)= 2\int_D \left[g_p\left(\dfrac{\beta d^s(x)-\beta d^s(y)-\chi_D(y)u(y)}{|x-y|^s} \right)-g_p\left(\dfrac{\beta d^s(x)-\beta d^s(y)}{|x-y|^s} \right)\right] \dfrac{dy}{|x-y|^{s+n}}.
\end{equation}Using the inequality
$$g_p(b)-g_p(a)\leq cg_p(b-a), \quad b\leq a,$$
%(see for instance \cite[Lemma 2.1]{MSV21}), 
we obtain
$$h(x)\leq 2c \int_D g_p\left(\dfrac{-u(y)}{|x-y|^s} \right) \dfrac{dy}{|x-y|^{s+n}}.$$

% For any $x\in B_1(e_n)\cap B_r(0)$, it holds that
%\begin{align*}
%(-\Delta_p)^s &\underline u (x)=\lim_{\varepsilon \to 0} \int_{B_\varepsilon^c(x)} \frac{ g_p(\underline u(x)-\underline u(y))}{|x-y|^{n+sp}} \,dy\\
%&=\lim_{\varepsilon \to 0} \int_{B_\varepsilon^c(x)} \frac{g_p(\beta d^s(x)- \beta d^s(y)- u(y) \chi_D(y)) }{|x-y|^{n+sp}}\,dy \\
%&\leq \lim_{\varepsilon \to 0} \int_{B_\varepsilon^c(x)\cap B_1(e_n)} \frac{g_p(   \beta d^s(x)-   \beta d^s(y))}{|x-y|^{n+sp}} \,dy+ 
%  \int_{B_1^c(e_n)} \frac{ g_p( \beta d^s(x)- u(y) \chi_D(y)) }{|x-y|^{n+sp}} \,dy\\
%&=\lim_{\varepsilon \to 0} \int_{B_\varepsilon^c(x)} \frac{ g_p( \beta d^s(x)- \beta d^s(y))}{|x-y|^{n+sp}} \,dy\\ 
%&\quad + 
% \int_{B_1^c(e_n)} \left[\frac{g_p(  \beta d^s(x)- u(y) \chi_D(y)) }{|x-y|^{n+sp}} - \frac{ g_p(\beta d^s(x) -  \beta d^s(y))}{|x-y|^{n+sp}}    \right]\,dy :=(i)+(ii).
%\end{align*}
%The term $(i)$ can be bounded using   Step 2 as
%$$
%(i)= 
%\beta^{p-1}(-\Delta_p)^s d^s(x)  \leq  \beta^{p-1}  C_1.
%$$
Then, since $D\subset\subset B_1^c(e_n)\cap \Omega$, there is $C_D>0$ such that $|x-y|\leq C_D$ for any $x\in B_1(e_n)$ and $y\in D$. Using that $g_p$ is increasing and odd we get
$$h(x)\leq -2cg_r(M_0)\bar{C}_D|D|:= -\widetilde{M}_0,$$
%\begin{align*}
%(ii)&\leq 
%\lim_{\varepsilon \to 0} \int_D \left[\frac{ g_p( \beta d^s(x)-u(y))}{|x-y|^{n+sp}}  - \frac{g_p( \beta d^s(x))}{|x-y|^{n+sp}}    \right]\,dy 
% \leq 
%   \bar C_D \int_D \left( g_p( \beta d^s(x)-u(y) ) - g_p(  \beta d^s(x) )    \right)\,dy,
%\end{align*}
where $\bar C_D:= C_D^{-(n+sp)}$ and  $M_0:=\min_{x\in D} u(x)>0$.

% and choose $\beta\leq \frac{M_0}{2}$. Then  we get
%$$
%(ii)\leq \bar C_D \int_D g_p(\beta d^s(x)-u(y))\,dy \leq \bar C_D \int_D g_p\left(\frac{M_0}{2}-M_0\right) dy=-g_p\left( \frac{M_0}{2} \right) \bar C_D |D|:=-\widetilde M_0,
%$$
%where $\widetilde M_0>0$.

 Now, define 
$$
M_1:=\inf_{x\in B_1(e_n)\cap B_r^c(0)} u(x)>0, \qquad 
M_2:=\sup_{x\in B_1(e_n)\cap B_r(0)} u(x)>0, 
$$
and we take $r$ small enough  so that $r \in (0, 1/3\sqrt{5})$ and
$$
g_p(M_2) < \dfrac{\widetilde M_0}{\|c\|_{\infty}}.
$$
This can be done since $u$ is continuous in $\overline{\Omega}$ and $u > 0$ in $\Omega$. Observe that this bound is uniform and independent of $r$, although  $r$ depends on the boundary point.
Next, choose 
$$
0 < \beta\leq  \left(\frac{\widetilde M_0-\|c\|_\infty g_p(M_2)}{C_1}\right)^\frac{1}{p-1}
$$
which leads to  (in the weak sense)
\begin{align} \label{eqcomp}
\begin{split}
(-\Delta_p)^s \underline u &\leq \beta^{p-1} C_1   -\widetilde{M}_0 \leq - g_p(M_2) \|c\|_\infty\\
&\leq c g_p(u)\leq (-\Delta_p)^s u \qquad \text{ in } B_1(e_n)\cap B_r(0).
\end{split}
\end{align}
Then, for $x\in B_1^c(e_n)$ we have that
$$
\underline{u}(x) = u(x) \chi_D(x) \leq u(x) 
$$
and for $x\in B_1(e_n)\setminus B_r(0)$ we have that
$$
\underline{u}(x) = \beta d^s(x) \leq \beta \leq u(x).
$$
In sum, we have obtained from the previous expression and  \eqref{eqcomp} that
\begin{align*}
\begin{cases}
(-\Delta_p)^s \underline u  \leq (-\Delta_p)^s u &\quad \text{ weakly in } B_1(e_n)\cap B_r(0),\\
\underline u(x)\leq u(x) &\quad \text{ in } (B_1(e_n)\cap B_r(0))^c.
\end{cases}
\end{align*}
Using the comparison principle given in \cite[Proposition 2.5]{DPQ} gives that
\begin{equation} \label{desuu}
\underline u(x)\leq u(x) \quad \text{ in } B_1(e_n)\cap B_r(0).
\end{equation}
By definition of $d(x)$, for any $t\in (0,1)$
\begin{equation} \label{Des1}
d(te_n)=\delta(te_n)
\end{equation}
where $\delta(x)=\dist(x,\Omega^c)$, and since $te_n \notin D$
\begin{equation} \label{Des2}
\underline u(te_n) = \beta  d^s(te_n),
\end{equation}
this gives, from \eqref{Des1}, \eqref{Des2} and \eqref{desuu}, that
$$
\frac{u(te_n)}{\delta^s(te_n)}=\frac{u(te_n)}{d^s(te_n)}=\frac{u(te_n)}{\beta^{-1}\underline u(te_n)} \geq \beta>0
$$
which completes the proof.
\end{proof}

\section{Hopf's Lemma for sing-changing potentials}\label{hopf lemma sign changing}

Given  a point $x_0\in \partial \Omega$ where the interior ball condition holds, we define inspired by \cite{DSV}, the collection of functions $\mathcal{Z}_{x_0}$ as follows: $u: \mathbb{R}^{n}\to \mathbb{R}$ belongs to $\mathcal{Z}_{x_0}$ if and only if $u$ is continuous in $\mathbb{R}^{n}$, $|u|>0$  in $B_r(x_r)$ for all sufficiently small $r$, $u(x_0)=0$ and the following growth condition is true
\begin{equation}\label{growth boundary u}
\limsup_{r\to 0}\Phi(r)=+\infty
\end{equation}where
$$
\Phi(r):= \dfrac{(\inf_{B_{r/2}(x_r)}|u|)^{p-1}}{r^{p s}}.
$$
We also recall that for $p\geq 2$ we denote $g_p(t):=|t|^{p-2}t$.

\medskip
We state now the main theorem of this section. 

\begin{theorem}\label{Hopf lemma normal derivative}
Let $\Omega \subset \mathbb{R}^n$ be open and bounded and $x_0 \in \partial \Omega$. Assume that $\Omega$ satisfies the interior ball condition at $x_0$.
Let $u\colon \mathbb{R}^n\to \mathbb{R}$ be in $\mathcal{Z}_{x_0}$, such that $u^{-}\in L^{\infty}(\mathbb{R}^{n})$, and
$$
(-\Delta_p)^{s}u \geq c(x)g_p(u)  \quad \text{ in }\Omega$$
weakly, where $c \in L^{1}_{loc}(\Omega)$ with $c^{-}\in L^{\infty}(\Omega)$.
Further, suppose that there is $R>0$ such that $u \geq 0$ in $B_R(x_0)$ and $u > 0 $ in $B_R(x_0)\cap \Omega$. Then, for every $\beta \in (0, \pi/2)$, the following strict inequality holds
\begin{equation}\label{normal derivative at boundary}
\liminf_{\Omega\ni x\to x_0}\dfrac{u(x)-u(x_0)}{|x-x_0|^{s}} > 0,
\end{equation}whenever the angle between $x-x_0$ and the vector joining $x_0$ and the center of the interior ball is smaller than $\pi/2-\beta$. 
\end{theorem}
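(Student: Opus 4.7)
The plan is to adapt the barrier-and-comparison strategy of Theorem~\ref{main1} to account for the two new complications: $c$ may change sign (only $c^-\in L^\infty$ is assumed) and $u$ need not be globally non-negative, only on $B_R(x_0)$. Since $u\in\mathcal Z_{x_0}$ forces $u(x_0)=0$, the conclusion \eqref{normal derivative at boundary} will reduce to a lower bound of the form $u(x)\geq C|x-x_0|^s$ on a truncated cone at $x_0$. I would start by a translation and rotation that puts $x_0=0$ and the inner normal at $e=e_n$. For each small $\rho>0$ the interior ball $B:=B_\rho(\rho e)$ lies in $B_R(x_0)\cap\Omega$, and the growth condition \eqref{growth boundary u} would let me select $\rho$ along a divergent sequence for which
\[
m:=\inf_{B_{\rho/2}(\rho e)} u \quad\text{satisfies}\quad \frac{m^{p-1}}{\rho^{sp}}\geq \Lambda,
\]
with $\Lambda>0$ chosen as large as needed later.

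The comparison will be carried out on a small cap $D:=B\cap B_r(x_0)$, for $r<\rho/4$ chosen so that the rescaling of \cite[Theorem~4]{CLQ} gives $(-\Delta_p)^s d^s(x)\leq C_1\rho^{-s}$ throughout $D$, where $d(x):=\dist(x,B^c)$. I would then propose the barrier
\[
\underline u(x):=\eta\, d^s(x)\,\chi_D(x)+m\,\chi_{B_{\rho/2}(\rho e)}(x)-\|u^-\|_{L^\infty(\mathbb R^n)}\,\chi_{\mathbb R^n\setminus B_R(x_0)}(x),
\]
with $\eta>0$ small (the condition $\eta\rho^s\leq m/2$ would suffice). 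The inequality $\underline u\leq u$ on $\mathbb R^n\setminus D$ should be immediate in each of the four remaining regions, using $u\geq m$ on $B_{\rho/2}(\rho e)$, $u\geq 0$ on $B_R(x_0)$, and $u\geq -\|u^-\|_\infty$ globally.

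The technical core would be the estimate of $(-\Delta_p)^s\underline u(x)$ for $x\in D$. The piece coming from $\eta d^s$ contributes at most $\eta^{p-1} C_1\rho^{-s}$. The jump across $\partial B_{\rho/2}(\rho e)$ --- where $\underline u$ passes from $\eta d^s(x)$ (small) to $m$ (large) --- should produce a strictly \emph{negative} contribution: since $r<\rho/4$ gives $|x-y|\geq \rho/4$ for $y\in B_{\rho/2}(\rho e)$, and the inequality $g_p(b)-g_p(a)\leq c\, g_p(b-a)$ for $b\leq a$ (as used in the proof of Theorem~\ref{main1}) applies, this contribution is bounded above by $-c_0 m^{p-1}\rho^{-sp}$. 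The outer tail ($y\in\mathbb R^n\setminus B_R(x_0)$, where $\underline u=-\|u^-\|_\infty$) adds only a bounded positive term $J$ depending on $\|u^-\|_\infty$ and $R$, since $|x-y|\geq R/2$ there. Because $u\geq 0$ in $B$, the right-hand side of the equation satisfies $c(x)g_p(u(x))\geq -\|c^-\|_{L^\infty}(\sup_D u)^{p-1}=:-N$, a finite constant. Taking $\eta$ small and then $\Lambda$ large, the dominant negative term $-c_0 m^{p-1}\rho^{-sp}$ will absorb $\eta^{p-1}C_1\rho^{-s}$, $J$ and $N$, and so yield $(-\Delta_p)^s\underline u\leq c(x)g_p(u(x))\leq (-\Delta_p)^s u$ weakly in $D$.

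The comparison principle \cite[Proposition~2.5]{DPQ} would then give $u\geq \eta d^s$ in $D$. For $x\in\Omega$ approaching $x_0$ with angle at most $\pi/2-\beta$ from $e$, an elementary expansion of $d(x)=\rho-|x-\rho e|$ yields $d(x)\geq \tfrac{\sin\beta}{2}|x-x_0|$ for $|x-x_0|$ small, and the desired $\liminf$ follows. The main obstacle I anticipate is the weighted bookkeeping in the estimate of $(-\Delta_p)^s\underline u$: one must ensure that the negative jump dominates the $O(\rho^{-s})$ blow-up of the $d^s$ piece \emph{uniformly} as $\rho$ ranges through the sequence from \eqref{growth boundary u}. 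This is precisely where the strength of the hypothesis $\limsup\Phi=+\infty$ is essential, as it is exactly what lets the negative contribution be made arbitrarily large compared to every other $\rho$-dependent term.
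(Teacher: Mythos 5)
Your high-level strategy (barrier plus comparison plus cone estimate, fuelled by the growth condition $\limsup\Phi=+\infty$ to overwhelm the right-hand side) is the same as the paper's, but your barrier construction diverges from it in a way that introduces real gaps.

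\textbf{What the paper does differently.} The paper works on the unit ball first: it takes $\eta\in C_c^\infty(B_{1-2\rho})$ and sets $\varphi=d^s+C\eta$, proving $(-\Delta_p)^s\varphi\le -1$ weakly in the annulus $B_1\setminus\overline{B_{1-\rho}}$, $\varphi\ge \tfrac12(1-|x|^2)^s$, $\varphi\le C$ in $B_{1-\rho}$ and $\varphi=0$ outside $B_1$. Then it rescales to $\psi_r(x)=\alpha_r\varphi((x-x_r)/r)$ with $\alpha_r=\tfrac1C\inf_{B_{r(1-\rho)}(x_r)}u$, so that $\psi_r\le u$ holds automatically in $B_{r(1-\rho)}(x_r)$, and compares $w=\psi_r-u^-$ with $u$ in the \emph{annulus} $B_r(x_r)\setminus\overline{B_{r(1-\rho)}(x_r)}$, not in a cap at $x_0$. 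Crucially $\psi_r$ (hence $w$) is continuous on $\mathbb{R}^n$ and lies in the right Sobolev class, so the comparison principle of \cite{DPQ} applies cleanly. The smooth bump $\eta$ is what produces the strictly negative $(-\Delta_p)^s$-contribution, and the subtraction of $u^-$ handles negativity outside $B_R(x_0)$ with a single uniformly bounded correction term.

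\textbf{Where your proposal has gaps.} Your barrier $\underline u=\eta d^s\chi_D+m\chi_{B_{\rho/2}(\rho e)}-\|u^-\|_\infty\chi_{\mathbb{R}^n\setminus B_R}$ has a jump discontinuity across $\partial D\cap B_r(x_0)$, which is exactly the boundary of the comparison domain. This has two consequences. First, for $sp\ge 1$ the indicator $\chi_D$ is not in $W^{s,p}_{\mathrm{loc}}$, so $\underline u\notin\widetilde W^{s,p}(D)$ and the comparison principle of \cite[Proposition~2.5]{DPQ} cannot be invoked as stated; the paper sidesteps this entirely by using a continuous barrier. Second, your bookkeeping of $(-\Delta_p)^s\underline u$ omits the contribution from $y\in B\setminus(D\cup B_{\rho/2}(\rho e))$, where $\underline u(y)=0$ but $\eta d^s(y)>0$. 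For $x\in D$ and $y$ in this region the integrand $g_p(\eta d^s(x)/|x-y|^s)|x-y|^{-(n+s)}$ is positive and, as $x\to\partial D\cap B_r$, integrating in $y$ gives a term of order $\mathrm{dist}(x,\partial D)^{-sp}$ that is \emph{not} controlled by the $O(\rho^{-sp})$ you need to absorb; the negative jump term $-c_0m^{p-1}\rho^{-sp}$ is uniform in $x$ and cannot dominate a quantity that is unbounded in $x$. The paper's annulus geometry is precisely the device that lets the barrier stay continuous, making all correction terms uniformly bounded before the growth condition is invoked.

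If you insist on a cap-based argument you would need either to replace $\chi_D$ by a smooth cutoff (which then breaks the inequality $\underline u\le u$ in $B\setminus D$, since you only know $u>0$ there, not $u\ge\eta d^s$) or to use the full $\eta d^s$ on $B$ and compare in a larger set --- at which point you essentially recover the paper's annulus construction.
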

 
\begin{proof}
For the reader's convenience we split the proof in several steps.

\textbf{Step 1}: First, we analyse the $p-$Laplacian of the distance function to the boundary of the unit ball.  By  Theorem 3.6 in \cite{IMS}, the distance function $d(x)=dist(x, B_1^{c})= 1-|x|$ satisfies that there is $\rho \in (0, 1/2)$ such that

\begin{equation}\label{esti lap distance function}
(-\Delta_p)^{s}d^{s}= f \in L^{\infty}(B_1\setminus \overline{B_{1-\rho}}) \text{ weakly in }B_1\setminus \overline{B_{1-\rho}}.
\end{equation}
Moreover, observe $d^{s}$ also satisfies the lower bound estimate
\begin{equation}\label{lower bound v}
d^{s}(x) \geq \dfrac{1}{2}(1-|x|^{2})^{s}.
\end{equation}We will use this estimate in what follows.

\medskip

\textbf{Step 2}: We prove that there exists a function $\varphi\geq 0$ and a constant $C=C(n,s,p)>0$ such that 
\begin{align} \label{step2'}
\begin{split}
\begin{cases}
(-\Delta_p)^{s} \varphi\leq -1 &\text{ in } x\in B_1\setminus \overline{B_{1-\rho}}, \\
\varphi\geq \frac{1}{2}(1-|x|^2)^s &\text{ in } B_1,\\
\varphi\leq C &\text{ in } B_{1-\rho},\\
\varphi=0 & \text{ in } \mathbb R^n \setminus B_1.
\end{cases}
\end{split}
\end{align}

\medskip

Let $\eta \in C_0^{\infty}(B_{1-2\rho})$ be nonnegative, $\eta \leq 1$, with $\int_{\mathbb{R}^{n}}\eta =1$. Then, for $x \in B_1\setminus \overline{B_{1-\rho}}$, we have
\begin{equation}\label{g lap eta}
\begin{split}
(-\Delta_p)^{s}\eta(x)& = 
\text{p.v.}\, \int_{\mathbb{R}^{n}}g_p\left(\dfrac{\eta(x)-\eta(y)}{|x-y|^{s}} \right)\dfrac{dy}{|x-y|^{n+s}}= -\text{p.v.}\,\int_{\mathbb{R}^{n}}g_p\left(\dfrac{\eta(z+x)}{|z|^{s}} \right)\dfrac{dz}{|z|^{n+s}}\\ & = 
-\text{p.v.}\, \int_{B_{1-2\rho}(-x)}g_p\left(\dfrac{\eta(z+x)}{|z|^{s}} \right)\dfrac{dz}{|z|^{n+s}}.
\end{split}
\end{equation}
Since $z \in B_{1-2\rho}(-x)$, we have that
$$
|z| \leq |z+x|+|x|\leq 2-2\rho,
$$
from where, \eqref{g lap eta} gives that for any $x\in B_1\setminus \overline{B_{1-\rho}}$
\begin{align}\label{est eta 100}
\begin{split}
(-\Delta_p)^{s}\eta(x) 
&\leq -\text{p.v.}\, \int_{B_{1-2\rho}(-x)}g\left(\dfrac{\eta(z+x)}{(2(1-\rho))^s}  \right)\frac{dz}{(2(1-\rho))^{n+s}}\\
&\leq -\left(\dfrac{1}{2(1-\rho)} \right)^{sp+n} \int_{B_{1-2\rho}} g_p(\eta(n(z)))\,dz = -C\left(\dfrac{1}{2(1-\rho)} \right)^{sp+n}. 
\end{split}
\end{align}
Therefore, for a  constant $C>0$ large enough to be chosen, we define
$$
\varphi= d^{s}+C\eta.
$$
Employing Lemma 2.8 in  \cite{IMS}, we get, weakly in $B_1\setminus \overline{B_{1-\rho}}$ that
\begin{equation}\label{g lap phi}
(-\Delta_p)^{s}\varphi= (-\Delta_p)^{s}d^{s}  + h,
\end{equation}where the function $h$ is given by
\begin{equation*}
\begin{split}
h(x)&= 2 \int_{B_{1-2\rho}}\left[g_p\left(\dfrac{d^{s}(x)-d^{s}(y)-C\eta(y)}{|x-y|^{s}} \right)- g_p\left(\dfrac{ d^{s}(x)- d^{s}(y)}{|x-y|^{s}} \right) \right]\dfrac{dy}{|x-y|^{n+s}}\\& \leq  2c_1\int_{B_{1-2\rho}}g_p\left(\dfrac{C(\eta(x)-\eta(y))}{|x-y|^{s}} \right)\dfrac{dy}{|x-y|^{n+s}}
\end{split}
\end{equation*}where we have used the following  inequality  
%(see for instance Lemma 2.1 in \cite{MSV21})
$$
g_p(b)-g_p(a)\leq c_1g_p(b-a), \quad b \leq a,
$$
being $c_1 > 0$. Hence, by \eqref{est eta 100}, we have pointwisely
\begin{equation}
h(x)\leq 2c_1C^{p-1}(-\Delta_p)^{s}\eta(x) \leq -2c_1C^{p-1}\left(\dfrac{1}{2(1-\rho)} \right)^{sp+n}.
\end{equation}Therefore, choosing $C>0$ large enough and recalling \eqref{esti lap distance function}, we obtain weakly in $ B_1\setminus \overline{B_{1-\rho}}$ that
\begin{equation}\label{estimate negative c}
(-\Delta_p)^{s}\varphi \leq -1.
\end{equation}
Also, observe that $\varphi \leq 1 + C$, $\varphi=0$ in $\mathbb{R}^{n}\setminus B_1$ and moreover by \eqref{lower bound v},
\begin{equation*}
\varphi(x) \geq  d^{s}(x) \geq \dfrac{1}{2 }(1-|x|^{2})^{s}.
\end{equation*}

\medskip
\textbf{Step 3}:  Consider the  balls $B_{r}(x_r)$ and points $x_r$ from the interior ball condition for $\Omega$ at $x_0$. For each $r > 0$ small enough, let for any  $x \in B_r(x_r)$
$$
\alpha_r :=\dfrac{1}{C}\inf_{B_{r(1-\rho)}(x_r)}u, 
\qquad 
\psi_r(x):=\alpha_r\varphi\left(\dfrac{x-x_r}{r}\right).
$$Here, the constant $C$ is the one defined in \eqref{step2'}. 
Then under these assumptions we will prove that it holds that
\begin{align} \label{step3'}
\begin{split}
\begin{cases}
(-\Delta_p)^{s}\psi_r \leq - \dfrac{1}{r^{s}} \left( \dfrac{\alpha_r}{r^{s}}\right)^{p-1} & \text{weakly in } B_r(x_r)\setminus \overline{B_{r(1-\rho)}(x_r)},\\
\psi_r \leq \alpha_r C &\text{ in }B_r(x_r), \\
\psi_r= 0 &\text{ in }\mathbb{R}^{n}\setminus B_r(x_r),\\ 
\psi_r\geq  \dfrac{\alpha_r}{2r^{2s}}(r^{2}-|x-x_r|^{2})^{s} &\text{ in } B_r(x_r).
\end{cases}
\end{split}
\end{align}

\medskip 
To prove \eqref{step3'}, we will proceed as follows: observe that for $x \in B_r(x_r)\setminus \overline{B_{r(1-\rho)}(x_r)}$ we have that $x'=\frac{x-x_r}{r} \in B_1 \setminus \overline{B_{1-\rho}}$. Then for any $\phi \in W_0^{s, p}(B_r(x_r)\setminus \overline{B_{r(1-\rho)}(x_r)})$,

\begin{equation}\label{g lapp psi r}
\begin{split}
\left\langle (-\Delta_p)^s \psi_r, \phi \right\rangle & = \int_{\mathbb{R}^n}\int_{\mathbb{R}^n}g_p\left(\dfrac{\psi_r(x)-\psi_r(y)}{|x-y|^s}\right)\dfrac{\phi(x)-\phi(y)}{|x-y|^s}\dfrac{dx\,dy}{|x-y|^n}\\ & = \int_{\mathbb{R}^n}\int_{\mathbb{R}^n}g_p\left(\alpha_r \frac{\varphi\left(\frac{x-x_r}{r}\right)-\varphi\left(\frac{y-x_r}{r}\right)}{|x-y|^{s}}\right)\dfrac{\tilde{\phi}\left(\frac{x-x_r}{r}\right)-\tilde{\phi}\left(\frac{y-x_r}{r}\right)}{|x-y|^s}\dfrac{dx\,dy}{|x-y|^n} \\ & = \dfrac{\alpha^{p-1}_r}{r^{sp}} \int_{\mathbb{R}^n}\int_{\mathbb{R}^n}g_p\left( \dfrac{\varphi(x')-\varphi(z)}{|x'-z|^{s}}\right)\dfrac{\tilde{\phi}\left(x'\right)-\tilde{\phi}\left(y'\right)}{|x'-y'|^s}\dfrac{dx'\,dy'}{|x'-y'|^n}\\ & = \dfrac{\alpha^{p-1}_r}{r^{sp}}\left\langle (-\Delta_p)^s \varphi, \tilde{\phi} \right\rangle ,
\end{split}
\end{equation}where
$$\tilde{\phi}(x'):= \phi(rx'+x_r)= \phi(x).$$Observe that $\tilde{\phi}\in W_0^{s, p}( B_1 \setminus \overline{B_{1-\rho}})$ and $\tilde{\phi}\geq 0$. Hence, by \eqref{estimate negative c},
\begin{equation}\label{g lap psi r}
 (-\Delta_p)^s \psi_r \leq -\dfrac{\alpha^{p-1}_r}{r^{sp}},
\end{equation}weakly in $B_r(x_r)\setminus \overline{B_{r(1-\rho)}(x_r)}$. Moreover, scaling and using again \eqref{step2'} we get that
\begin{equation}
\psi_r \leq \alpha_r C \quad \text{in }B_r(x_r), 
\quad 
\psi_r= 0 \quad \text{in }\mathbb{R}^{n}\setminus B_r(x_r), 
\text{ and }
\psi_r\geq  \dfrac{\alpha_r}{2r^{2s}}(r^{2}-|x-x_r|^{2})^{s}.
\end{equation}

\medskip
\textbf{Step 4}: 
The function $w:= \psi_r-u^{-}$ satisfies that $u \geq w$ a.e. in $\mathbb R^n$.

\medskip

To prove this assertion we will use comparison. Observe first that $w \leq u$ in $\mathbb{R}^{n}\setminus B_r(x_r)$, moreover, in $B_{r(1-\rho)}(x_r)$ we have that
$$
w = \psi_r \leq \alpha_rC \leq \inf_{B_{r(1-\rho)}(x_r)}u \leq u.
$$
Also,  by \eqref{step3'} and Lemma 2.8 in  \cite{IMS},
$$(-\Delta_p)^{s} w \leq - \dfrac{1}{r^{s}} \left( \dfrac{\alpha_r}{r^{s}}\right)^{p-1} + h,$$weakly in $B_r(x_r)\setminus \overline{B_{r(1-\rho)}(x_r)}$, where the function $h$ is given by
\begin{equation*}
\begin{split}
h(x)& = 2 \int_{supp\,\,u^{-}}\left[g_p\left(\dfrac{\psi_r(x)-\psi_r(y)+u^{-}(y)}{|x-y|^{s}}\right)-g_p\left(\dfrac{\psi_r(x)-\psi_r(y)}{|x-y|^{s}} \right) \right]\dfrac{dy}{|x-y|^{n+s}}\\ & =  2 \int_{supp\,\,u^{-}}\left[g_p\left(\dfrac{\psi_r(x)+u^{-}(y)}{|x-y|^{s}}\right)-g_p\left(\dfrac{\psi_r(x)}{|x-y|^{s}} \right) \right]\dfrac{dy}{|x-y|^{n+s}}
 \leq C^{*}
\end{split}
\end{equation*}since $\psi_r$ and $u^{-}$ are bounded and $|x-y|\geq \delta >0$, for some $\delta$. Hence, weakly in $B_r(x_r)\setminus \overline{B_{r(1-\rho)}(x_r)}$, we obtain
$$
(-\Delta_p)^{s} w \leq  C^{*} -\dfrac{\Phi(r)}{C^{p^{+}-1}}  = C^{*}- \dfrac{1}{r^{s}} \left( \dfrac{\alpha_r}{r^{s}}\right)^{p-1} \leq 
-\|c^{-}u^{+}\|_{L^{\infty}(\mathbb{R}^{n})} \leq cg_p(u) \leq  (-\Delta_p)^{s}u,
$$
where we have used \eqref{growth boundary u}. By the comparison principle, we obtain that $u \geq w$ in $\mathbb{R}^{n}$.

\medskip
\textbf{Step 5}: Final argument.

\medskip
To complete the proof, we argue as in \cite{DSV}. We include details for completeness. For $\beta \in (0, \pi/2)$, we consider the set 
\begin{equation}
\mathcal{C}_\beta:=\left\lbrace x\in \Omega: \dfrac{x-x_0}{|x-x_0|}\cdot \nu > c_\beta \right\rbrace,
\end{equation}where $\nu$ is the inward normal vector joining $x_0$ with the center of the interior ball, and define the constant $c_\beta := \cos\left(\frac{\pi}{2}-\beta \right) > 0$. Take any sequence of points $x_k\in \mathcal{C}_\beta$ such that $x_k \to x_0$. Then,
\begin{equation*}
\begin{split}
|x_k-x_r|^{2}& = |x_k -x_0-r\nu|^{2}= |x_k-x_0|^{2}+r^{2} -2r(x_k-x_0)\cdot \nu \\ & \leq r^{2}-|x_k-x_0|(2c_\beta r - |x_k-x_0|) < r^{2},
\end{split}
\end{equation*}for $k$ large enough. Moreover, since
$$|x_k-x_r| \geq |x_r-x_0|+|x_k-x_0| = r-|x_k-x_0| > r(1-\rho)$$for $k$ large, then $x_k \in B_r(x_r)\setminus \overline{B_{r(1-\rho)}(x_r)}$. Next,
\begin{equation*}
\begin{split}
u(x_k)& \geq w(x_k)= \psi_r(x_k) \geq \frac{\alpha_r}{r^{2s}2^{s}}(r^{2}-|x_k-x_r|^{2})_{+}^{s}\\
& = \frac{\alpha_r}{2r^{2s}}( r^{2}-|x_k-x_0-r\nu|^{2})_{+}^{s}\\ & = \frac{\alpha_r}{2r^{2s}}(2(x_k-x_0)\cdot \nu -|x_k-x_0|^{2})_{+}^{s}\geq  \frac{\alpha_r}{2r^{2s}}(2c_\beta r|x_k-x_0|-|x_k-x_0|^{2})_{+}^{s}.
\end{split}
\end{equation*}Therefore,
$$
\liminf_{k \to \infty}\dfrac{u(x_k)-u(x_0)}{|x_k-x_0|^{s}}\geq  \frac{\alpha_r}{2r^{2s}}\liminf_{k \to \infty}(2c_\beta r-|x_k-x_0|)_{+}^{s} = \dfrac{2^{s-1}\alpha_rc_\beta^{s}}{r^{2s}}.
$$
This ends the proof of the theorem. 
\end{proof}

\begin{remark}\label{remark boundary}
Observe that  although the equation is nonlocal, in the above proof, we only used that $u$ is a supersolution near the boundary of $\Omega$ (see Step 4). 
\end{remark}

By the interior sphere condition and \cite{HKS}, for any $x_0 \in \partial \Omega$, there exist $x_1\in \Omega$ and $\rho > 0$ such that
$$B_{\rho}(x_1)\subset \Omega, \quad d(x)=|x-x_0|,$$for all $x\in \Omega$ of the form
$$x= tx_1+(1-t)x_0, \quad t \in [0, 1].$$Hence, by the interior ball condition, we can find a sequence $x_n \in \Omega$ such that
$$\delta(x_n)= \text{dist}\,(x_n, \partial \Omega)= |x_n-x_0| \to 0 \text{ as }n \to \infty.$$

Moreover, if $u/\delta^{s}$ can be extended to a continuous function to $\partial \Omega$, Theorem \ref{Hopf lemma normal derivative} implies that
$$\dfrac{u(x_0)}{\delta^{s}(x_0)}
= \lim_{\overline\Omega \ni x\to x_0}\dfrac{u(x)-u(x_0)}{\delta^{s}(x)}
= \lim_{n\to \infty}\dfrac{u(x_n)-u(x_0)}{\delta^{s}(x_n)}
= \lim_{n\to \infty}\dfrac{u(x_n)-u(x_0)}{|x_n-x|^{s}}>0.$$

We establish another direct consequence. Since the function $g_p$ is odd, by changing $u$ by $-u$ in Theorem \ref{Hopf lemma normal derivative}, we obtain the following corollary.

\begin{corollary}\label{corolario normal frac}
Let $\Omega \subset \mathbb{R}^n$ be open and bounded and $x_0 \in \partial \Omega$. Assume that $\Omega$ satisfies the interior ball condition at $x_0$.
Let $u\colon\mathbb{R}^n\to \mathbb{R}$ be in $\mathcal{Z}_{x_0}$, such that $u^{+}\in L^{\infty}(\mathbb{R}^{n})$, and
$$
(-\Delta_p)^{s}u \leq c(x)g_p(u)  \quad \text{weakly in }\Omega
$$
where $c \in L^{1}_{loc}(\Omega)$ with $c^{-}\in L^{\infty}(\Omega)$.
Further, suppose that there is $R>0$ such that $u \leq 0$ in $B_R(x_0)$, $u < 0 $ in $B_R(x_0)\cap \Omega$. Then, for every $\beta \in (0, \pi/2)$, the following strict inequality holds
\begin{equation}\label{normal derivative at boundary}
\limsup_{x\in \Omega, x\to x_0}\dfrac{u(x)-u(x_0)}{|x-x_0|^{s}} < 0,
\end{equation}whenever the angle between $x-x_0$ and the vector joining $x_0$ and the center of the interior ball is smaller than $\pi/2-\beta$. 
\end{corollary}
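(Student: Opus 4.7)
The strategy is to reduce the corollary to Theorem~\ref{Hopf lemma normal derivative} by setting $v := -u$ and exploiting the odd symmetry of the fractional $p$-Laplacian. Since $g_p(-t) = -g_p(t)$ for $p \geq 2$, one has $(-\Delta_p)^s(-u) = -(-\Delta_p)^s u$ as a direct consequence of the integral representation. Therefore, if $u$ satisfies the subsolution inequality $(-\Delta_p)^s u \leq c(x) g_p(u)$ in $\Omega$, then $v = -u$ satisfies
\begin{equation*}
(-\Delta_p)^s v = -(-\Delta_p)^s u \geq -c(x) g_p(u) = c(x) g_p(-u) = c(x) g_p(v) \quad \text{weakly in } \Omega,
\end{equation*}
so $v$ is a supersolution of the same equation with the same potential $c$.

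Next, I would verify that the hypotheses of Theorem~\ref{Hopf lemma normal derivative} transfer from $u$ to $v$. Since $|v| = |u|$, continuity is preserved, $v(x_0) = -u(x_0) = 0$, and the growth condition \eqref{growth boundary u} is the same (it is symmetric in $|u|$), so $v \in \mathcal{Z}_{x_0}$. Because $v^- = u^+ \in L^\infty(\mathbb{R}^n)$ by assumption, the required integrability is in place. Finally, the sign conditions reverse correctly: $u \leq 0$ in $B_R(x_0)$ gives $v \geq 0$ in $B_R(x_0)$, and $u < 0$ in $B_R(x_0) \cap \Omega$ gives $v > 0$ in $B_R(x_0) \cap \Omega$.

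Applying Theorem~\ref{Hopf lemma normal derivative} to $v$, for every $\beta \in (0, \pi/2)$ and for $x$ approaching $x_0$ in the corresponding cone,
\begin{equation*}
\liminf_{\Omega \ni x \to x_0} \frac{v(x) - v(x_0)}{|x-x_0|^s} > 0.
\end{equation*}
Substituting $v = -u$ and $v(x_0) = -u(x_0)$, and using $\liminf(-f) = -\limsup f$, this rewrites as
\begin{equation*}
\limsup_{\Omega \ni x \to x_0} \frac{u(x) - u(x_0)}{|x-x_0|^s} < 0,
\end{equation*}
which is precisely \eqref{normal derivative at boundary}.

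There is no substantial obstacle in this argument; the only thing to be careful about is ensuring that each hypothesis in Theorem~\ref{Hopf lemma normal derivative} is verified for $v$, in particular the interchange of $u^+$ and $u^-$ and the fact that $c$ itself does not change sign under the transformation $u \mapsto -u$ (only the roles of $c^+$ and $c^-$ with respect to the problem change, but $c^- \in L^\infty$ is what is used in the proof of the theorem). The proof therefore occupies only a few lines and is purely formal.
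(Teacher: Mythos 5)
Your argument is correct and is precisely the paper's: the paper proves the corollary in one line by applying Theorem~\ref{Hopf lemma normal derivative} to $-u$, using that $g_p$ is odd. Your write-up simply spells out the (straightforward) verification that all the hypotheses transfer under $u \mapsto -u$, which is what the paper leaves implicit.
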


Reasoning as before, we may also conclude, under the assumptions of Corollary \ref{corolario normal frac} and the hypothesis that $u/\delta^{s} \in C(\overline{\Omega})$, that
$$\dfrac{u(x_0)}{\delta^{s}(x_0)} < 0.$$

\section{Boundary point theorems}\label{bp theorem}

As applications of Theorem \ref{Hopf lemma normal derivative}, we will provide for two version of the classical boundary point lemma in the setting of the fractional $p-$Laplacian (see for instance Section 2.7 in \cite{PS} for elliptic equations in the local case).

The first result is a direct consequence of Theorem \ref{Hopf lemma normal derivative} and  Corollary \ref{corolario normal frac}.
\begin{theorem}\label{bpl1}
Let $\Omega \subset \mathbb{R}^n$ be open and bounded and $x_0 \in \partial \Omega$. Assume that $\Omega$ satisfies the interior ball condition at $x_0$.
Let $u, v:\mathbb{R}^n\to \mathbb{R}$ be in $\mathcal{Z}_{x_0}$, such that $u^{+}, v^{-}\in L^{\infty}(\mathbb{R}^{n})$, and
$$(-\Delta_p)^{s}u \leq c(x)g_p(u), \quad (-\Delta_p)^{s}v \geq c(x)g_p(v) \quad \text{weakly in }\Omega,$$
where $V \in L^{1}_{loc}(\Omega)$ with $c^{-}\in L^{\infty}(\Omega)$.
Further, suppose that there is $R>0$ such that $u \leq 0 \leq v$ in $B_R(x_0)$, $u < 0 < v$ in $B_R(x_0)\cap \Omega$. Then, for every $\beta \in (0, \pi/2)$, we have the following strict inequality
\begin{equation}
\dfrac{u(x)-u(x_0)}{|x-x_0|^{s}}  < \dfrac{v(x)-v(x_0)}{|x-x_0|^{s}}, \quad \text{as }x \to x_0, \, x \in \Omega,
\end{equation}whenever the angle between $x-x_0$ and the vector joining $x_0$ and the center of the interior ball is smaller than $\pi/2-\beta$. 
\end{theorem}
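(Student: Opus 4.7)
The proof is a direct combination of Theorem \ref{Hopf lemma normal derivative} and Corollary \ref{corolario normal frac}, so the plan is short. First, since $u, v \in \mathcal{Z}_{x_0}$, by the definition of that class we have $u(x_0) = v(x_0) = 0$, so the two quotients in the conclusion are just $u(x)/|x-x_0|^{s}$ and $v(x)/|x-x_0|^{s}$.

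Next, I would verify that all hypotheses of Theorem \ref{Hopf lemma normal derivative} hold for $v$: it lies in $\mathcal{Z}_{x_0}$, $v^{-}\in L^{\infty}(\mathbb{R}^{n})$, it is a weak supersolution of $(-\Delta_p)^{s}v \geq c(x)g_p(v)$ in $\Omega$, and it satisfies $v \geq 0$ in $B_R(x_0)$ with $v > 0$ in $B_R(x_0)\cap \Omega$. Fixing $\beta \in (0,\pi/2)$, that theorem then produces a constant $\delta_v > 0$ such that
$$
\frac{v(x)-v(x_0)}{|x-x_0|^{s}} \geq \delta_v
$$
for every $x \in \Omega$ sufficiently close to $x_0$ whose angle with the inward normal at $x_0$ is smaller than $\pi/2-\beta$. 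A symmetric check shows that Corollary \ref{corolario normal frac} applies to $u$ (namely $u^{+}\in L^{\infty}$, $u$ is a weak subsolution of the corresponding inequality, $u \leq 0$ in $B_R(x_0)$, and $u < 0$ in $B_R(x_0)\cap\Omega$), producing some $\delta_u > 0$ with
$$
\frac{u(x)-u(x_0)}{|x-x_0|^{s}} \leq -\delta_u
$$
for the same collection of $x$.

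Finally, chaining the two estimates gives, for such $x$,
$$
\frac{u(x)-u(x_0)}{|x-x_0|^{s}} \leq -\delta_u < 0 < \delta_v \leq \frac{v(x)-v(x_0)}{|x-x_0|^{s}},
$$
which is exactly the asserted strict inequality. I do not expect any serious obstacle, because the substantive work (constructing a barrier from the distance function, controlling $(-\Delta_p)^{s}d^{s}$ near the boundary, and invoking the comparison principle) is already packaged inside Theorem \ref{Hopf lemma normal derivative} and its sign-flip Corollary \ref{corolario normal frac}; the content of Theorem \ref{bpl1} is merely the juxtaposition of those two one-sided asymptotic bounds at $x_0$.
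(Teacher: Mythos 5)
Your proof is correct and matches the paper's intended argument exactly: the paper explicitly states that Theorem \ref{bpl1} is a direct consequence of Theorem \ref{Hopf lemma normal derivative} (applied to $v$) and Corollary \ref{corolario normal frac} (applied to $u$), and chaining the resulting one-sided bounds on the quotients is all that is needed.
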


The following boundary point theorem does not require a constant sign of the solutions in a neighborhood of the boundary point. Instead, we will need a  natural decaying (thanks to Hopf's Lemma)   of the difference $v-u$ near boundary points. Moreover, we will also impose more regularity on the solutions. To motive this latter regularity hypothesis, we first prove that it implies that the operator $(-\Delta_p)^{s}$ is continuous and hence point-wisely defined. This result is a refinement of \cite[Lemma 2.17]{FBPLS} where a similar result has been obtained in the context of Orlicz functions. Observe that our proof also holds in that framework.

\begin{lemma} Suppose $u \in L_{s p}(\mathbb{R}^{n})\cap C^{1, \gamma}_{loc}(\mathbb{R}^{n})$, for some $\gamma \geq \max\{0, 1-p(1-s)\} $. Then
$$
(-\Delta_p)^{s}u(x)< \infty \quad \text{a.e. in }\mathbb{R}^{n}.
$$
Moreover, if $u \in L^{\infty}(\mathbb{R}^{n})\cap C^{1}(\mathbb{R}^{n})$ and
$$
p> \dfrac{1}{1-s},
$$
then $(-\Delta_p)^{s}u \in C(\mathbb{R}^{n})$.
\end{lemma}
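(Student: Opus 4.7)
The plan is to split the singular integral defining $(-\Delta_p)^s u(x)$ into a \emph{near} contribution on a small ball $B_\delta(x)$ and a \emph{far} contribution on its complement, estimate each separately, and for the continuity claim combine a uniform smallness estimate on the near part with dominated convergence on the far part.

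For the near contribution, the essential device is to symmetrize the principal value through the change of variables $y\mapsto 2x-y$: writing $a:=u(x)-u(y)$ and $b:=u(2x-y)-u(x)$ and exploiting that $g_p$ is odd, one obtains
$$\mathrm{p.v.}\int_{B_\delta(x)}\frac{g_p(u(x)-u(y))}{|x-y|^{n+sp}}\,dy=\frac{1}{2}\int_{B_\delta(x)}\frac{g_p(a)-g_p(b)}{|x-y|^{n+sp}}\,dy.$$
From $u\in C^{1,\gamma}_{loc}(\mathbb{R}^n)$, first-order Taylor expansion at $x$ gives $|a|,|b|\le C|x-y|$, together with the second-difference bound $|a-b|=|2u(x)-u(y)-u(2x-y)|\le C|x-y|^{1+\gamma}$. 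For $p\ge 2$, the mean value theorem applied to $g_p$ yields $|g_p(a)-g_p(b)|\le (p-1)\max(|a|,|b|)^{p-2}|a-b|\le C|x-y|^{p-1+\gamma}$, so the symmetrized integrand is dominated by $C|x-y|^{\gamma+p(1-s)-1-n}$. This is locally integrable precisely when $\gamma+p(1-s)>1$, which matches the hypothesis $\gamma\ge\max\{0,1-p(1-s)\}$ (interpreted with strict inequality in the borderline case).

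The far contribution is handled by the elementary inequality $|g_p(u(x)-u(y))|\le 2^{p-2}(|u(x)|^{p-1}+|u(y)|^{p-1})$: the first term yields a convergent tail integral $|u(x)|^{p-1}\int_{|x-y|>\delta}|x-y|^{-(n+sp)}\,dy$, while the second term is finite thanks to $u\in L_{sp}(\mathbb{R}^n)$, since $|x-y|$ and $1+|y|$ are comparable once $y$ lies outside any fixed neighborhood of $x$. Combining the two estimates yields $(-\Delta_p)^s u(x)<\infty$, proving the first assertion.

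For the continuity claim, the hypothesis $p>1/(1-s)$ makes $1-p(1-s)<0$, so plain $C^1$ regularity ($\gamma=0$) already powers the near bound. Given $x_k\to x$, fix a compact neighborhood $K$ containing the sequence; the near estimate, now depending only on $\|\nabla u\|_{L^\infty(K)}$, controls the near part at each $x_k$ uniformly by $C\delta^{p(1-s)-1}$, which is small since $p(1-s)>1$. For the far part, $|x_k-y|$ and $|x-y|$ are comparable once $|x_k-x|<\delta/2$, so the $L^\infty$ and $L_{sp}$ bounds together produce a uniform $L^1$ dominating function while the integrand converges pointwise by continuity of $u$; dominated convergence therefore closes the far part. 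Letting first $k\to\infty$ and then $\delta\to 0$ gives $(-\Delta_p)^s u(x_k)\to(-\Delta_p)^s u(x)$. The main technical obstacle is the symmetrization-plus-MVT estimate: a naive $C^1$ bound gives only $|x-y|^{p-1}$ in the numerator, insufficient for $sp\ge 1$, and it is precisely the second-difference cancellation that produces the sharp exponent $\gamma+p(1-s)-1-n$ matching the hypothesis.
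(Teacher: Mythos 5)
Your proof of the a.e.\ finiteness is essentially the same as the paper's: both symmetrize the near part of the principal value via $y\mapsto 2x-y$ to expose a second difference, bound $g_p(a)-g_p(b)$ by $(p-1)\max(|a|,|b|)^{p-2}|a-b|$ (the paper cites the equivalent inequality $|g_p(a+b)-g_p(b)|\le C(|a|+|b|)^{p-2}|a|$), insert the $C^{1,\gamma}$ bounds $|a|,|b|\le C|x-y|$ and $|a-b|\le C|x-y|^{1+\gamma}$ to obtain the exponent $\gamma+p(1-s)-1-n$, and handle the far part via $|x-y|\gtrsim 1+|y|$ together with $u\in L_{sp}(\mathbb{R}^n)$. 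You are also right to flag the borderline case $\gamma=1-p(1-s)>0$: with that equality the near integrand is $\sim|x-y|^{-n}$, which is not locally integrable, so the hypothesis should strictly be $\gamma>\max\{0,1-p(1-s)\}$; the paper has the same slight imprecision.

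For the continuity claim you take a genuinely different and softer route. The paper establishes a quantitative H\"older-type modulus of continuity: it bounds $|(-\Delta_{p,\varepsilon})^s u(x)-(-\Delta_{p,\varepsilon})^s u(y)|$ directly, splitting the factor $|a|=|a|^{\alpha}|a|^{1-\alpha}$ with $\alpha\in(0,1)$ to interpolate between the $|h|$-decay and the $|x-y|$-smallness, arriving at $I_2\lesssim |x-y|^{1-\alpha}$ once $p>(2-\alpha)/(1-s)$, and letting $\alpha\uparrow 1$. You instead freeze a small radius $\delta$, use the symmetrized second-difference bound to show the near parts at $x_k$ and at $x$ are each $O(\delta^{p(1-s)-1})$ uniformly, and pass to the limit on the far part by dominated convergence with the $L^\infty$ dominating function; then let $\delta\to 0$. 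Your argument is more elementary and requires only local $C^1$ data together with $u\in L^\infty$, whereas the paper's quantitative estimate for $I_1$ tacitly uses a Lipschitz-type bound $|u(x)-u(x+h)-u(y)+u(y+h)|\le C|x-y|$ for arbitrarily large translations $h$, which is not automatic from $C^1\cap L^\infty$ alone. The trade-off is that the paper's approach, when its assumptions are strengthened appropriately, yields an explicit modulus of continuity rather than mere continuity. Both arguments are correct as written under their respective readings of the hypotheses.
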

\begin{proof}Let
$$
h(x):=\int_{\mathbb{R}^{n}\setminus B_1(x)}g_p\left(\dfrac{u(x)-u(y)}{|x-y|^{s}} \right)\dfrac{dy}{|x-y|^{n+s}}
$$
and for $\varepsilon > 0$ define 
$$
h_\varepsilon(x):=\int_{B_1(x)}g_p\left(\dfrac{u(x)-u(y)}{|x-y|^{s}} \right)\chi_{x, \varepsilon}(y)\dfrac{dy}{|x-y|^{n+s}},
$$
where $\chi_{x, \varepsilon}$ is the characteristic function of the set $B_1(x)\setminus B_{\varepsilon}(x)$.  By Lemma A.5 in \cite{FBSV2}, there is a constant $C> 0$ such that
\begin{equation}\label{contant lemma}
|x-y|\geq C (1+|y|), \text{ for all }y \in \mathbb{R}^{n}\setminus B_1(x).
\end{equation}Hence,
\begin{equation*}
|h(x)|\leq C_0 \int_{\mathbb{R}^{n}\setminus B_1(x)} \left( g_p\left(\dfrac{Cu(x)}{(1+|y|)^{s}} \right) +g_p\left(\dfrac{Cu(y)}{(1+|y|)^{s}} \right) \right)\dfrac{dy}{(1+|y|)^{n+s}}< \infty,
\end{equation*}
where we have used the facts that the constant function $C|u(x)|$ and the function $u(y)$ are in $L_{sp}(\mathbb{R}^{n})$. Regarding $h_{\varepsilon}(x)$, we write
$$
h_{\varepsilon}(x)= \dfrac{1}{2}\int_{B_1(x)}g_p\left(\dfrac{u(x)-u(y)}{|x-y|^{s}} \right)\chi_{x, \varepsilon}(y)\dfrac{dy}{|x-y|^{n+s}}+\dfrac{1}{2}\int_{B_1(x)}g_p\left(\dfrac{u(x)-u(y)}{|x-y|^{s}} \right)\chi_{x, \varepsilon}(y)\dfrac{dy}{|x-y|^{n+s}}
$$
and we make the change of variables $z=y-x$ in the first integral and $z=x-y$ in the second, to get
\begin{equation}\label{ineq h epsilon}
\begin{split}
h_{\varepsilon}(x)&= \dfrac{1}{2}\int_{B_1}g_p\left(\dfrac{u(x)-u(z+y)}{|z|^{s}} \right)\chi_{\varepsilon}(z)\dfrac{dz}{|z|^{n+s}}+\dfrac{1}{2}\int_{B_1}g_p\left(\dfrac{u(x)-u(x-z)}{|z|^{s}} \right)\chi_{\varepsilon}(z)\dfrac{dz}{|z|^{n+s}}\\ & =\dfrac{1}{2}\int_{B_1}\left[g_p\left(\dfrac{u(x)-u(z+y)}{|z|^{s}} \right)-g_p\left(\dfrac{u(x-z)-u(x)}{|z|^{s}} \right)\right]\chi_{\varepsilon}(z)\dfrac{dz}{|z|^{n+s}},
\end{split}
\end{equation}where $\chi_{\varepsilon}$ is the characteristic function of $B_1\setminus B_{\varepsilon}$. Due to the inequality
\begin{equation}\label{ineq with g}
|g_p(a+b)-g_p(b)|\leq C_1(|b|+|a|)^{p-2}|a|,
\end{equation}we obtain from  \eqref{ineq h epsilon} that
\begin{equation}\label{est integrand}
\begin{split}
& \bigg |g_p\left(\dfrac{u(x)-u(z+y)}{|z|^{s}} \right)-g_p\left(\dfrac{u(x-z)-u(x)}{|z|^{s}}\right)\bigg| \\ & \leq   \left(\dfrac{|u(x)-u(x-z)|}{|z|^{s}}+\dfrac{|2u(x)-u(x+z)-u(x-z)|}{|z|^{s}} \right)^{p-2}\dfrac{|2u(x)-u(x+z)-u(x-z)|}{|z|^{s}}.
\end{split}
\end{equation}Since  $$|2u(x)-u(x+z)-u(x-z)| \leq C|h|^{1+\gamma},$$ 
we obtain that the integrand \eqref{ineq h epsilon} is bounded from above for any $\varepsilon$ by
\begin{equation}
|z|^{(1-s)(p-2)+1+\gamma-n-2s},
\end{equation}which  is integrable in $B_1$  provided
$$\gamma \geq \max\{0, 1-p(1-s)\}.$$
Hence, by dominated convergence theorem,
$$
(-\Delta_p)^{s}u(x)= h(x)+\lim_{\varepsilon\to 0^{+}}h_\varepsilon(x) < \infty,
$$
for a.e. $x$. This ends with the proof of the first assertion.

We next prove the continuity of $(-\Delta_p)^{s}u$. We denote, for $\varepsilon>0$
$$
(-\Delta_{p,\varepsilon}) u(x)=\int_{\mathbb R^n \setminus B_\varepsilon(x)} g_p\left(\dfrac{u(x)-u(z)}{|x-z|^{s}} \right)\dfrac{dz}{|x-z|^{n+s}}
$$
and write
\begin{equation}\label{continuity 0}
\begin{split}
&|(-\Delta_{p, \varepsilon})^{s}u(x)-(-\Delta_{p, \varepsilon})^{s}u(y)| \leq \\& \quad  \leq \bigg| \int_{\mathbb{R}^{n}\setminus B_1(x)}g_p\left(\dfrac{u(x)-u(z)}{|x-z|^{s}} \right)\dfrac{dz}{|x-z|^{n+s}}- \int_{\mathbb{R}^{n}\setminus B_1(y)}g_p\left(\dfrac{u(y)-u(z)}{|y-z|^{s}} \right)\dfrac{dz}{|y-z|^{n+s}} \bigg|\\ & \quad + \bigg| \int_{ B_1(x)\setminus B_\varepsilon(x)}g_p\left(\dfrac{u(x)-u(z)}{|x-z|^{s}} \right)\dfrac{dz}{|x-z|^{n+s}}- \int_{ B_1(y)\setminus B_\varepsilon(y)}g_p\left(\dfrac{u(y)-u(z)}{|y-z|^{s}} \right)\dfrac{dz}{|y-z|^{n+s}} \bigg|= I_1 +I_2.
\end{split}
\end{equation}By changing variables and using that $u \in L^{\infty}(\mathbb{R}^{n})$ and \eqref{ineq with g}, we get
\begin{equation}\label{continuity 1}
\begin{split}
& I_1 = \bigg| \int_{\mathbb{R}^{n}\setminus B_1}g_p\left(\dfrac{u(x)-u(x+h)}{|h|^{s}} \right)\dfrac{dh}{|h|^{n+s}}- \int_{\mathbb{R}^{n}\setminus B_1}g_p\left(\dfrac{u(y)-u(y+h)}{|h|^{s}} \right)\dfrac{dh}{|h|^{n+s}} \bigg|\\ & \,\, \leq \int  _{\mathbb{R}^{n}\setminus B_1}\bigg(\dfrac{|u(y)-u(y+h)|}{|h|^{s}} + \dfrac{|u(x)-u(x+h)-u(y)+u(y+h)|}{|h|^{s}} \bigg)^{p-2}\dfrac{|u(x)-u(x+h)-u(y)+u(y+h)|}{|h|^{s}}\dfrac{dh}{|h|^{n+s}} \\ & \,\, \leq |x-y| \int  _{\mathbb{R}^{n}\setminus B_1}|h|^{-s(p-2)-2s-n}dh \leq C|x-y|.
\end{split}
\end{equation}
Regarding $I_2$, we take any $\alpha \in (0, 1)$ and we proceed as follows, using again \eqref{ineq with g} and that $u \in C^{1}(\mathbb{R}^{n})$:
\begin{equation}\label{continuity 2}
\begin{split}
& I_2 = \bigg| \int_{ B_1\setminus B_\varepsilon}g_p\left(\dfrac{u(x)-u(x+h)}{|h|^{s}} \right)\dfrac{dh}{|h|^{n+s}}- \int_{\mathbb{R}^{n}\setminus B_1}g_p\left(\dfrac{u(y)-u(y+h)}{|h|^{s}} \right)\dfrac{dh}{|h|^{n+s}} \bigg|\\ & \, \leq \int  _{ B_1\setminus B_\varepsilon} \bigg(\dfrac{|u(y)-u(y+h)|}{|h|^{s}} + \dfrac{|u(x)-u(x+h)-u(y)+u(y+h)|}{|h|^{s}} \bigg)^{p-2}\\& \qquad \times \dfrac{|u(x)-u(x+h)-u(y)+u(y+h)|^{\alpha+1-\alpha}}{|h|^{s}}\dfrac{dh}{|h|^{n+s}} \\ & \, \leq C|x-y|^{1-\alpha} \int_{ B_1\setminus B_\varepsilon}|h|^{(1-s)(p-2)+\alpha-2s-n}dh.
\end{split}
\end{equation}Then, observe that 
$$
\int_{ B_1\setminus B_\varepsilon}|h|^{(1-s)(p-2)+\alpha-2s-n}dh<\infty \quad \text{ provided that } \quad p\geq \dfrac{2-\alpha}{1-s}.
$$
Since $\alpha$ is arbitrarily chosen in $(0, 1)$, the integral is finite provided
$p> \dfrac{1}{1-s}$. Therefore, combining \eqref{continuity 1} and \eqref{continuity 2} with \eqref{continuity 0}, and taking $\varepsilon\to 0^{+}$,  we conclude that $(-\Delta_p)^{s}u \in C(\mathbb{R}^{n})$. 
\end{proof}

We next give the main boundary point result for the fractional $p-$Laplacian. We state it in a ball for simplicity.

\begin{theorem}\label{bpl2}Assume  that $p> \min\{1/(1-s), 2\}$. Let $u, v \in C^{1}(\overline{B_1})\cap L^{\infty}(\mathbb{R}^{n})$ be in $\mathcal{Z}_{x_0}$ at any boundary point $x_0$. Moreover, suppose that
\begin{equation}\begin{cases}\label{assumpt bpl}
(-\Delta_p)^{s}u-c(x)g_p(u) \leq (-\Delta_p)^{s}v-c(x)g_p(v) \quad \text{ weakly in }B_1\\ u < v  \quad \text{ in }B_1\\ v= u \quad \text{ in }\mathbb{R}^{n}\setminus B_1\\ v(x)-u(x)\geq C(1-|x|)^{s}, \quad \text{ uniformly as }|x|\to 1.
\end{cases}
\end{equation}Assume that  $c\in L^{\infty}(B_1)$. Then, for any $x_0\in \partial B_1$,
\begin{equation}\label{conclusion bpt}
\dfrac{u(x)-u(x_0)}{|x-x_0|^{s}}  < \dfrac{v(x)-v(x_0)}{|x-x_0|^{s}}, \quad \text{as }x \to x_0 \quad (x \in  B_1).
\end{equation}
\end{theorem}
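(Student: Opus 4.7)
The plan is to reduce the conclusion to the standing Hopf-type lower bound $v-u \geq C(1-|x|)^s$ combined with an elementary geometric estimate on $B_1$; no additional fractional machinery beyond what is already encoded in hypothesis \eqref{assumpt bpl} is needed for the deduction itself.

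\emph{Step 1 (setup).} Set $w:=v-u$. Since $u,v\in\mathcal{Z}_{x_0}$ are continuous on $\mathbb{R}^n$ and coincide in $\mathbb{R}^n\setminus B_1$, the function $w$ is continuous, nonnegative, and vanishes on $\mathbb{R}^n\setminus B_1$. In particular, $u(x_0)=v(x_0)$ for every $x_0\in\partial B_1$, so the claim \eqref{conclusion bpt} is equivalent to producing a positive lower bound for $w(x)/|x-x_0|^s$ as $x\to x_0$ along any fixed nontangential approach.

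\emph{Step 2 (geometric comparison).} Fix $x_0\in\partial B_1$, whose inward unit normal is $\nu=-x_0$, and for $\beta\in(0,\pi/2)$ set $c_\beta:=\cos(\pi/2-\beta)$ together with the cone
\begin{equation*}
\mathcal{C}_\beta:=\Bigl\{x\in B_1\colon \tfrac{x_0-x}{|x_0-x|}\cdot x_0\geq c_\beta\Bigr\}.
\end{equation*}
Expanding $|x-x_0|^2=1+|x|^2-2x\cdot x_0$ gives $1-x\cdot x_0=\tfrac{1}{2}(1-|x|^2+|x-x_0|^2)$, which combined with the cone condition $1-x\cdot x_0\geq c_\beta|x-x_0|$ yields
\begin{equation*}
1-|x|^2\geq 2c_\beta|x-x_0|-|x-x_0|^2.
\end{equation*}
For $|x-x_0|\leq c_\beta$ the right-hand side dominates $c_\beta|x-x_0|$, and dividing by $1+|x|\leq 2$ produces the key estimate
\begin{equation*}
1-|x|\geq \frac{c_\beta}{2}\,|x-x_0|,\qquad x\in\mathcal{C}_\beta,\ |x-x_0|\leq c_\beta.
\end{equation*}

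\emph{Step 3 (conclusion).} Raising the previous inequality to the $s$-th power and substituting into hypothesis \eqref{assumpt bpl} (fourth line) gives
\begin{equation*}
\frac{w(x)}{|x-x_0|^s}\geq C\Bigl(\tfrac{c_\beta}{2}\Bigr)^s>0,\qquad x\in\mathcal{C}_\beta\ \text{near}\ x_0.
\end{equation*}
Using $u(x_0)=v(x_0)$ and $w=v-u$, this rewrites as
\begin{equation*}
\frac{v(x)-v(x_0)}{|x-x_0|^s}-\frac{u(x)-u(x_0)}{|x-x_0|^s}\geq C\Bigl(\tfrac{c_\beta}{2}\Bigr)^s>0,
\end{equation*}
which is exactly \eqref{conclusion bpt}.

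\emph{Main obstacle.} The one genuinely nontrivial ingredient is the radial-versus-chord comparison of Step 2; it is short but must be run with enough care that the implicit constant stays strictly positive for every fixed $\beta\in(0,\pi/2)$. The PDE inequality, the strict ordering $u<v$ in $B_1$, and the preceding continuity lemma never appear explicitly in the deduction: their role is to make hypothesis \eqref{assumpt bpl} (fourth line) natural---essentially Theorem \ref{Hopf lemma normal derivative} applied to $w$---but once that Hopf-type lower bound is granted, the boundary-point inequality follows from the geometry of $B_1$ alone.
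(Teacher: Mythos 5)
Your proof is correct, and it takes a genuinely different and far more elementary route than the paper's. The paper sets $w := v - u$ and, using the PDE inequality, the regularity hypothesis $u,v \in C^1(\overline{B_1})$ together with $p > 1/(1-s)$, and $c \in L^\infty(B_1)$, shows that $(-\Delta_p)^s w - c\,g_p(w) \geq 0$ weakly in an annulus $B_1 \setminus B_{1-\delta}$; the conclusion is then obtained by invoking Theorem \ref{Hopf lemma normal derivative} (in view of Remark \ref{remark boundary}). You observe instead that the last line of hypothesis \eqref{assumpt bpl} --- the decay $v-u \geq C(1-|x|)^s$ near $\partial B_1$ --- is itself a Hopf-type lower bound for $w$, and that the conclusion follows from it alone via the elementary geometric estimate $1-|x| \geq \tfrac{c_\beta}{2}\,|x-x_0|$ in the nontangential cone, which you derive correctly in Step 2. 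This makes the PDE hypotheses, the strict ordering, and the preceding continuity lemma superfluous for the theorem exactly as stated, which is a sharp and slightly deflating observation. What the paper's longer argument buys is robustness: as the Remark immediately following Theorem \ref{bpl2} records, the paper's proof continues to work under the weaker decay $v-u \geq c(1-|x|)^{s+\varepsilon}$ with $0<\varepsilon<s/(p-1)$, a setting in which your direct computation collapses, since $(1-|x|)^{s+\varepsilon}/|x-x_0|^s \to 0$ even for nontangential approach. So the PDE machinery in the paper is genuinely earning its keep for that relaxed hypothesis, while yours is the minimal argument for the hypotheses as literally written; the two proofs together suggest the theorem statement would be cleaner if it carried the relaxed decay from the start.
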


\begin{proof}
Observe that $w:=v-u$ is positive in $B_1$ and $w=0$ on $\partial B_1$. Moreover, $w \in \mathcal{Z}_{x_0}$ at any boundary point $x_0$. Observe that the conclusion \eqref{conclusion bpt} is obtained by applying Theorem \ref{Hopf lemma normal derivative} if we prove that
\begin{equation}\label{ineq with w}
(-\Delta_p)^{s}w-c(x)g_p(w) \geq 0 
\end{equation}
weakly in a neighborhood of the boundary of $B_1$ (see Remark \ref{remark boundary}). Now, to establish \eqref{ineq with w} fix $\phi \in W_0^{s, p}(B_1\setminus B_{1-\delta})$, $\phi \geq 0$, with $\delta > 0$ to be chosen later, and compute

\begin{equation}\label{est 2 w}
\begin{split}
\left\langle (\Delta_p)^{s}w -cg_p(w), \phi \right\rangle & \geq  2\int_{\mathbb{R}^n\setminus B_1}\int_{B_1}g_p\left(\dfrac{v(x)-u(x)}{|x-y|^s} \right)\phi(x)\dfrac{dx\,dy}{|x-y|^{n+s}} \\ &  + \int_{B_1}\int_{B_1}g_p\left(\dfrac{v(x)-v(y)-u(x)+u(y)}{|x-y|^s} \right)\dfrac{\phi(x)-\phi(y)}{|x-y|^s}\dfrac{dx\,dy}{|x-y|^{n}}\\ & \qquad  -\int_{B_1}c^{+}(x)g_p(w(x))\phi(x)\,dx\\ & = 2\int_{ B_1}\left[\int_{\mathbb{R}^n\setminus B_1}g_p\left(\dfrac{v(x)-u(x)}{|x-y|^s} \right)\dfrac{dy}{|x-y|^{n+s}}\right]\phi(x)dx\\ & +  2\int_{B_1}\left[\int_{B_1}g_p\left(\dfrac{v(x)-v(y)-u(x)+u(y)}{|x-y|^s} \right)\dfrac{dy}{|x-y|^{n+s}}\right]\phi(x)\,dx\\ & -\int_{B_1}c^{+}(x)g_p(w(x))\phi(x)\,dx \\ &   \geq  I_1+I_2 -\int_{B_1}c^{+}(x)g_p(w(x))\phi(x)\,dx.
\end{split}
\end{equation}
%\begin{equation}
%\begin{split}
%&(-\Delta_p)^{s}w(x)-c(x)g_p(w) \\& \qquad   \geq \int_{\mathbb{R}^{n}\setminus B_1}g_p\left(\dfrac{v(x)-u(x)}{|x-y|^{s}} \right)\dfrac{dy}{|x-y|^{s+n}} + \int_{B_1}g_p\left(\dfrac{v(x)-u(x)-v(y)+u(y)}{|x-y|^{s}} \right)\dfrac{dy}{|x-y|^{s+n}} \\ & \qquad  -c^{+}(x)g_p(v(x)-u(x)) \\ & \qquad  \geq
%\end{split}
%\end{equation}
We first work on $I_2$. Since $w \in C^{1}(\overline{\Omega})$,
\begin{equation}\label{est w 0}
\begin{split}
|I_2|\leq C\int_{B_1}\left[\int_{B_1}g_p(|x-y|^{1-s})\dfrac{dy}{|x-y|^{n+s}}\right] \phi(x)\,dx \leq C\int_{B_1}\phi(x)\,dx,
\end{split}
\end{equation}since $p> 1/(1-s)$.  Regarding $I_1$,
\begin{equation}\label{est 1 w}
\begin{split}
I_1 & \geq  \int_{B_1\setminus B_{1-\delta}}\left[\int_{\{y \in \mathbb{R}^{n}\setminus B_1: 1-|x| \leq |y-x|\leq 2(1-|x|) \}} g_p\left(\dfrac{v(x)-u(x)}{|x-y|^{s}} \right)\dfrac{dy}{|x-y|^{s+n}}\right]\phi(x)\,dx\\ & \geq \int_{B_1\setminus B_{1-\delta}}\left[\int_{\{y \in \mathbb{R}^{n}\setminus B_1: 1-|x| \leq |y-x|\leq 2(1-|x|) \}} g_p\left(\dfrac{v(x)-u(x)}{2^{s}(1-|x|)^{s}} \right)\dfrac{dy}{|x-y|^{s+n}}\right]\phi(x)\,dx\\& \geq g_p(C)\int_{B_1\setminus B_{1-\delta}}\left[\int_{\{y \in \mathbb{R}^{n}\setminus B_1: 1-|x| \leq |y-x|\leq 2(1-|x|) \}}|x-y|^{-s-n}\,dy\right]\phi(x)\,dx \qquad (\text{by }\eqref{assumpt bpl})\\ &  \geq \dfrac{g_p(C)}{s}\int_{B_1\setminus B_{1-\delta}}\left[(1-|x|)^{-s}-2^{-s}(1-|x|)^{-s}\right]\phi(x)\,dx\\ & \geq  \dfrac{g_p(C)}{s}\left(1-\dfrac{1}{2^{s}}\right) \delta^{-s}\int_{B_1\setminus B_{1-\delta}}\phi(x)\,dx.
\end{split}
\end{equation}Finally, the integral
$$-\int_{B_1}c^{+}(x)g_p(w(x))\phi(x)\,dx$$is clearly bounded from below by
$$C\int_{B_1}\phi(x)\,dx,$$for some $C$. Hence, by \eqref{est 2 w}, \eqref{est w 0} and \eqref{est 1 w}, and choosing $\delta > 0$ small enough, we prove that $w$ satisfies \eqref{ineq with w} weakly in $B_1\setminus B_{1-\delta}$. This ends the proof of the theorem.
\end{proof}

\begin{remark}
In view of \eqref{est 1 w}, the lower decay of $v(x)-u(x)$ near the boundary may be relaxed to 
$$v(x)-u(x)\geq c(1-|x|)^{s+\varepsilon},$$where $\varepsilon > 0$ and satisfies
$$t^{(p-1)\varepsilon-s}\to \infty\quad \text{ as }t\to 0^{+}.$$\end{remark}

\section{An overdetermined problem}\label{overdetermined}

Given $p\geq 2$, we consider  the following overdetermined problem
\begin{align}\label{over}
\begin{cases}
(-\Delta_p)^{s} u=1 & \text{ in } \Omega\\
u=0 &\text{ in } \mathbb R^n\setminus \Omega\\
\lim_{\Omega\ni z\to z} \frac{u(x)}{\delta_\Omega(x)^s}=q(|z|) &\text{ for every }z\in \partial\Omega, 
\end{cases}
\end{align}
where $q\colon \partial \Omega\to\mathbb{R}$ is a suitable function. Suppose that \eqref{over} is solvable, that is, there exists $u\in C(\mathbb R^n)$ such that the ratio $u(x)/(\delta_\Omega(x))^s$ has a continuous extension to $\overline\Omega$ and the three conditions prescribed in \eqref{over} are satisfied. We will answer the following question: is it possible to infer that $\Omega$ is a ball?

\medskip
In the linear case the answer is positive and it is given in \cite[Theorem 1.3]{GS} when assuming $\Omega$ to satisfy the interior ball condition on $\partial \Omega$, and $q(r)$ to satisfy that $q(r)/r^s$ is strictly increasing in $r>0$.

Understanding the behavior of the torsion problem is crucial for providing a response in the nonlinear setting:   in Lemma 4.1 of  \cite{IMS} it is proved that there is a unique weak solution $w \in W^{s,p}_0(\Omega)$ of the   problem
\begin{align}\label{torsion}
\begin{cases}
(-\Delta_p)^{s} w=1 & \text{ in } B_1\\
w=0 & \text{ in } \mathbb R^n\setminus \Omega
\end{cases}
\end{align}
which is bounded, radially symmetric, non increasing and positive, i.e., $w(x)=\omega(|x|)$ for some $\omega\colon \mathbb R_+\to \mathbb R$. More precisely, by Lemma 2.9 in \cite{IMS}, if we define $u_R(x)=w(|x|/R)$ then the scaled function $u_R$ solves
\begin{align}\label{torsionR}
\begin{cases}
(-\Delta_{p})^s u_R=R^{-sp} & \text{ in } B_R\\
u_R=0 & \text{ in } \mathbb R^n\setminus B_R.
\end{cases}
\end{align}

In particular, since $\delta_R(x)= \text{dist }(x, \partial B_R)=R-|x|$, we can define for any $z\in \partial B_R$ the function
\begin{equation}\label{rho s}
\lim_{B_R\ni x\to z} \frac{u_R(x)}{(\delta_{B_R}(x))^s}= \lim_{B_R\ni x\to z} \frac{\omega(|x|/R)}{(R-|x|)^s}:=\rho_s(R).
\end{equation}

This boundary value may be thought   as a fractional replacement of the inner normal derivative in the local case. 

In the case $p=2$, it is known (see for instance \cite{GS}) that $u_R(x)=\gamma_{n,s}((R^2-|x|^2)_+)^s$, where $\gamma_{n,s}$ is a positive constant depending only of $n$ and $s$, so, in this case, formula \eqref{rho s}  gives that $\rho_s(R)=2^s\gamma_{n,s}R^s$.

 \medskip

\begin{theorem}  \label{overth}
Let $\Omega$  be a bounded open set in $\mathbb R^n$, $n\geq 1$, containing the origin and satisfying the interior ball condition at any $z\in \partial \Omega$, and let $q(r)$ be a non-negative function of $r>0$. Assume that the ratio
$$
q(r)/ \rho_s(r)
$$
is strictly increasing in $r>0$. Then if \eqref{over} admits a solution,  $\Omega$ is a ball centered at the origin.
\end{theorem}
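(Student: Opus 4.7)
The plan is a two-sided sandwich comparison: inscribe the largest ball centered at the origin in $\Omega$ and circumscribe the smallest such ball, compare $u$ with the explicit radial torsion function on each, and use the strict monotonicity of $q/\rho_s$ to rule out the case where the two radii differ. More precisely, I would set
\[
R_1:=\sup\{r>0:\,B_r\subseteq\Omega\}\qquad\text{and}\qquad R_2:=\sup_{x\in\Omega}|x|.
\]
Since $0\in\Omega$ and $\Omega$ is bounded, $0<R_1\le R_2<\infty$, and a standard compactness argument produces contact points $z_i\in\partial B_{R_i}\cap\partial\Omega$ for $i=1,2$, so that $B_{R_1}\subseteq\Omega\subseteq\overline{B_{R_2}}$. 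If $R_1=R_2$, openness of $\Omega$ forces $\Omega=B_{R_1}$ and there is nothing left to prove, so I may assume $R_1<R_2$ and aim for a contradiction.

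Next I bring in the torsion function on concentric balls. Let $T_R$ denote the unique radial positive weak solution of $(-\Delta_p)^s T_R=1$ in $B_R$ with $T_R\equiv 0$ on $\mathbb R^n\setminus B_R$ (whose existence, radial symmetry, and scaling come from Lemmas 4.1 and 2.9 of \cite{IMS}); by construction, its boundary profile matches the value appearing in the overdetermined condition, namely
\[
\lim_{|x|\to R^-}\frac{T_R(x)}{(R-|x|)^s}=\rho_s(R).
\]
Two applications of the weak comparison principle \cite[Proposition~2.5]{DPQ} supply the one-sided bounds I need: on $B_{R_1}$ both $u$ and $T_{R_1}$ solve $(-\Delta_p)^s v=1$ while $u\ge 0=T_{R_1}$ on $\mathbb R^n\setminus B_{R_1}$, hence $u\ge T_{R_1}$ in $B_{R_1}$; on $\Omega$ both $u$ and $T_{R_2}$ solve the same equation while $u=0\le T_{R_2}$ on $\mathbb R^n\setminus\Omega$, hence $u\le T_{R_2}$ in $\Omega$.

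The core geometric step is identifying $\delta_\Omega$ with the flat profile $R_i-|x|$ along the radial segment from $0$ to $z_i$, since only along this segment can I pass to the limit and compare with $\rho_s(R_i)$. Along the ray toward $z_1$, the inclusion $B_{R_1}\subseteq\Omega$ gives $\delta_\Omega(x)\ge R_1-|x|$ and $z_1\in\partial\Omega$ with $|x-z_1|=R_1-|x|$ gives the reverse, so $\delta_\Omega(x)=R_1-|x|$. Along the ray toward $z_2$, the sandwich $\Omega\subseteq\overline{B_{R_2}}$ with $z_2\in\partial\Omega$ gives $\delta_\Omega(x)\le R_2-|x|$, while the interior ball condition at $z_2$ (whose interior ball is necessarily centered on the inward radial direction $-z_2/R_2$, since $\Omega\subseteq\overline{B_{R_2}}$ pins the tangent hyperplane of $\partial\Omega$ at $z_2$) yields $\delta_\Omega(x)\ge R_2-|x|$ for $x$ close enough to $z_2$. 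Taking the radial limits granted by the overdetermined hypothesis, inserting the two comparison inequalities, and dividing by $\rho_s(R_i)$ I obtain
\[
\frac{q(R_1)}{\rho_s(R_1)}\ge 1\ge\frac{q(R_2)}{\rho_s(R_2)}.
\]
But $R_1<R_2$ combined with the strict monotonicity of $q/\rho_s$ forces $q(R_1)/\rho_s(R_1)<q(R_2)/\rho_s(R_2)$, so the chain $1\le q(R_1)/\rho_s(R_1)<q(R_2)/\rho_s(R_2)\le 1$ collapses to $1<1$, the desired contradiction.

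The main obstacle I anticipate is the identification $\delta_\Omega(x)=R_2-|x|$ along the radial ray to $z_2$: the $\le$ half is immediate, but the $\ge$ half requires extracting an interior ball at $z_2$ centered precisely on the inward radial direction, which in turn uses that the sandwich $\Omega\subseteq\overline{B_{R_2}}$ pins down the tangent hyperplane of $\partial\Omega$ at $z_2$. A secondary concern is verifying that the weak comparison principle applies with $T_{R_2}$ (extended by zero outside $B_{R_2}$) playing the role of a competitor on $\Omega$ rather than on $B_{R_2}$, which is standard but worth flagging.
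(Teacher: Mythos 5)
Your proof follows the same strategy as the paper's: sandwich $\Omega$ between the largest inscribed and smallest circumscribed balls $B_{R_1}\subseteq\Omega\subseteq \overline{B_{R_2}}$ centered at the origin, compare $u$ with the radial torsion functions on these balls, identify $\delta_\Omega$ with $R_i-|x|$ along the radial segment to the contact point $z_i$, and then use strict monotonicity of $q/\rho_s$ to force $R_1=R_2$. Two points are worth flagging where your write-up is actually more careful than the paper's. First, on the geometric step near $z_2$ you correctly argue that $\Omega\subseteq\overline{B_{R_2}}$ pins down the tangent hyperplane of $\partial\Omega$ at $z_2$, so the interior ball guaranteed by the ball condition must be centered on the inward radial direction and hence $\delta_\Omega(x)=R_2-|x|$ along that ray; the paper simply records the identity \eqref{equ3} (with a typo, $z_1$ for $z_2$) without this justification, and the identity is genuinely needed since $u\le u_{R_2}$ alone does not control $u/\delta_\Omega^s$ when $\delta_\Omega<\delta_{R_2}$. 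Second, and more substantively, you compare $u$ against the torsion function $T_R$ normalized so that $(-\Delta_p)^s T_R=1$ in $B_R$, which is the normalization for which the weak comparison principle (equivalently, Lemma \ref{mono}) applies directly to the pair $(T_{R_1},u)$ and $(u,T_{R_2})$. The paper instead works with $u_R(x)=w(x/R)$, which by the stated scaling \eqref{torsionR} solves $(-\Delta_p)^s u_R=R^{-sp}$, not $1$; as written, Lemma \ref{mono} does not apply to conclude $u_{R_1}\le u\le u_{R_2}$, and the discrepancy $T_R=R^{sp/(p-1)}u_R$ propagates into the definition \eqref{rho s} of $\rho_s$. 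Your implicit redefinition of $\rho_s$ via $T_R$ is the consistent choice, and with it the argument closes cleanly.
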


 To prove Theorem \ref{overth}, we will need the following technical lemma.

\begin{lemma}[Monotonicity] \label{mono}
Let $\Omega_1\subset \Omega_2$ be two bounded and open domains in $\mathbb R^n$, $n\geq 1$, and let $u_i$ be the continuous weak solution to
\begin{align*}
\begin{cases}
(-\Delta_{p})^s u_1=1 & \text{ in } \Omega_1\\
u_1=0 & \text{ in } \mathbb R^n\setminus \Omega_1
\end{cases},
\qquad
\begin{cases}
(-\Delta_{p})^s u_2=1 & \text{ in } \Omega_2\\
u_2=0 & \text{ in } \mathbb R^n\setminus \Omega_2.
\end{cases}
\end{align*}Then $u_1\leq u_2$ in $\mathbb R^n$.
\end{lemma}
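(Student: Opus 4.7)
The plan is to reduce the statement to a direct application of the weak comparison principle for the fractional $p$-Laplacian invoked earlier in the paper (Proposition 2.5 in \cite{DPQ}). The whole content of the lemma should really lie in identifying the right sub/supersolution pair on the smaller domain $\Omega_1$.

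First, I would verify that $u_2 \geq 0$ in the whole of $\mathbb{R}^n$. Outside $\Omega_2$ this is automatic since $u_2 \equiv 0$ there. Inside $\Omega_2$ one tests the weak formulation of the equation for $u_2$ against $-(u_2)^{-}\in W_0^{s,p}(\Omega_2)$; using the standard monotonicity estimate
\[
(g_p(a)-g_p(b))\bigl((a)_{-}-(b)_{-}\bigr)\leq -c\,|(a)_{-}-(b)_{-}|^{p},
\]
one obtains $[(u_2)^-]_{W^{s,p}(\mathbb{R}^n)}\le 0$ after using $(u_2)^{-}\ge 0$ and the right-hand side being $+1$, which forces $(u_2)^{-}\equiv 0$. (Alternatively, one may simply quote the fact recalled right before the statement, that the torsion solution is non-negative.)

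Next, I would view both $u_1$ and $u_2$ as elements of $\widetilde W^{s,p}(\Omega_1)$ and compare them on $\Omega_1$. Since $\Omega_1\subset \Omega_2$, the function $u_2$ is a weak solution of $(-\Delta_p)^{s}u_2=1$ on $\Omega_1$ as well, so
\[
(-\Delta_p)^{s}u_1 \;=\; 1 \;=\; (-\Delta_p)^{s}u_2 \quad\text{weakly in }\Omega_1.
\]
On the exterior complement we have $u_1\equiv 0$, whereas $u_2\ge 0$ by the first step, so $u_1\le u_2$ in $\mathbb{R}^{n}\setminus\Omega_1$. The weak comparison principle (\cite[Proposition 2.5]{DPQ}) now yields $u_1\le u_2$ in $\Omega_1$, and combining with $u_1=0\le u_2$ outside $\Omega_1$ gives $u_1\le u_2$ throughout $\mathbb{R}^n$.

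The only technical point that needs care is checking that the comparison principle from \cite{DPQ} applies in exactly this setup (weak sub- and supersolutions in $\widetilde W^{s,p}(\Omega_1)$ whose difference has the right exterior sign). This is essentially the definition of weak comparison, so I do not expect a genuine obstacle; the main thing to be pedantic about is that $u_1,u_2\in \widetilde W^{s,p}(\Omega_1)$ and that $(u_1-u_2)_{+}$ is an admissible test function, which follows from $u_1=0\le u_2$ on $\mathbb{R}^n\setminus\Omega_1$ and the Lipschitz regularity of $t\mapsto t_{+}$ on $W^{s,p}(\mathbb{R}^n)$.
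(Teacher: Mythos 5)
Your argument is correct and follows essentially the same route as the paper: establish $u_2\ge 0$ (hence $u_1\le u_2$ outside $\Omega_1$), observe that $u_2$ weakly satisfies $(-\Delta_p)^s u_2 = 1$ when tested against functions in $W_0^{s,p}(\Omega_1)$, and conclude by the weak comparison principle. The only cosmetic differences are that the paper cites the strong maximum principle of \cite{LL14} for $u_i\ge 0$ and invokes the comparison principle as \cite[Lemma 9]{LL14} rather than \cite[Proposition 2.5]{DPQ}.
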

\begin{proof}
By the strong maximum principle \cite[Lemma 12]{LL14} we can assume that $u_1, u_2\geq 0$ and since $u_1=0$ in $\mathbb R^n\setminus \Omega_1$ we have that $u_2\geq 0=u_1$ in $\mathbb R^n\setminus \Omega_1$. 

Moreover, given any nonnegative continuous function $\psi \in W^{s,p}_0(\Omega_1)$
$$
\langle (-\Delta_{p})^s  u_2 ,\psi \rangle = \int_{\Omega_2} \psi\,dx \geq \int_{\Omega_1}\psi\,dx, \qquad \langle (-\Delta_{p})^s  u_1,\psi \rangle = \int_{\Omega_1} \psi\,dx
$$
then
$$
\langle (-\Delta_{p})^s u_2,\psi \rangle - \langle (-\Delta_{p})^s u_1,\psi \rangle \geq 0
$$
and the comparison principle \cite[Lemma 9]{LL14} gives that $u_2\geq u_1$ in $\mathbb R^n$.
\end{proof}

\begin{proof}[Proof of Theorem \ref{overth}]
Let $u$ be a solution of \eqref{over}. Let us see that $\Omega$ is a ball. Define the radii $R_1\leq R_2$
$$
R_1=\min_{z\in\partial \Omega} |z|, \qquad R_2=\max_{z\in\partial \Omega} |z|. 
$$
Then, $B_{R_1}$ is the largest ball centered at the origin and contained in $\Omega$, and $B_{R_2}$ is the smallest ball centered at the origin and containing $\Omega$, and there exist $z_i\in\partial\Omega$, $i=1,2$ satisfying  $|z_i|=R_i$.

Let us see that $R_1=R_2$ and therefore we may conclude that $\Omega$ is a ball. Denote by $u_{R_i}$ the solution of \eqref{torsionR} when $R=R_i$, $i=1,2$. By Lemma \ref{mono}, these solutions as ordered as follows
\begin{equation} \label{equ1}
 u_{R_1}\leq u \leq u_{R_2} \quad \text{ in } \mathbb R^n.  
\end{equation}
Let $\nu_1$ the outer normal to $\partial B_{R_1}$ at $z_1$. Observe that the point $x=z_1-t\nu_1$, $t\in [0,R_1]$ runs along the ray $r$ of $B_{1}$ passing through $z_1$, then
\begin{equation} \label{equ2}
\delta_{R_1}(x)=|x-z_1|=\delta_\Omega(x).
\end{equation}
From \eqref{equ1} and \eqref{equ2} we get
$$
\rho_s(R_1)=\lim_{B_{R_1}\ni x\to z_1} \frac{u_{R_1}(x)}{\delta_{R_1}(x)^s} \leq 
\lim_{\Omega\ni x\to z_1} \frac{u(x)}{\delta_{\Omega}(x)^s} =q(|z_1|)=q(R_1).
$$
Similarly, since $\Omega$ satisfies the interior ball condition, there exists $B_R\subseteq \Omega$ with $z_2\in\partial B_R\subseteq B_{R_2}$. Then the outer normal $\nu_2$ to $\partial B_{R_2}$ at $z_2$ is also normal to $\partial B_R$. Letting $x=z_2-t\nu_2$ with $t\in [0,R]$ we have that
\begin{equation} \label{equ3}
\delta_{R_2}(x)=|x-z_1|=\delta_\Omega(x),
\end{equation}
and
$$
q(R_2)=\lim_{B_{R_2}\ni x\to z_2} \frac{u(x)}{\delta_{\Omega}(x)^s} \leq 
\lim_{\Omega\ni x\to z_2} \frac{ u_{R_2}(x)}{\delta_{R_2}(x)^s} = \rho_s(R_2).
$$
Hence, this analysis leads to
$$
\frac{q(R_2)}{ \rho_s(R_2)} \leq 1 \leq \frac{q(R_1)}{ \rho_s(R_1)}.
$$
Since the ratio $\frac{q(r)}{\rho_s(r)}$ is strictly increasing for $r>0$, $R_1=R_2$ and then $\Omega=B_{R_1}=B_{R_2}$, that is, $\Omega$ is a ball.
\end{proof}

%\subsection*{Acknowledgements.} 

\end{document}